\documentclass[11pt,letterpaper]{amsart}
 
  \usepackage{amsmath,amssymb,amsfonts,amsthm,amscd,textcomp,times,booktabs,mathrsfs}
   \usepackage{graphicx}
   \usepackage{hyperref}
    \usepackage{braket}
    \usepackage{tabularx}
  \usepackage{color,float}
  \usepackage{bm}
\usepackage[all]{xy}
 \usepackage{wrapfig}
 \usepackage[alphabetic]{amsrefs}
 \usepackage{url}
 \usepackage{pdflscape}

\DefineSimpleKey{bib}{myurl}

\newcommand\myurl[1]{\url{#1}}

\BibSpec{webpage}{%
  +{}{\PrintAuthors} {author}
  +{,}{ \textit} {title}
  +{}{ \parenthesize} {date}
  +{,}{ \myurl} {myurl}
  +{,}{ } {note}
  +{.}{ } {transition}
}

\newtheorem{theorem}{Theorem}

\newtheorem{claim}{Claim}
\newtheorem{proposition}[theorem]{Proposition}
\newtheorem{lemma}[theorem]{Lemma}

\newtheorem{corollary}[theorem]{Corollary}
\newtheorem{remark}[theorem]{Remark}

\numberwithin{equation}{section}
\numberwithin{theorem}{section}

\newcommand{\Z}{\ensuremath{\mathbb{Z}}}

\newcommand{\R}{\ensuremath{\mathbb{R}}}
\newcommand{\C}{\ensuremath{\mathbb{C}}}

\newcommand{\GL}{\ensuremath{\mathrm{GL}}}
\newcommand{\SL}{\ensuremath{\mathrm{SL}}}

\newcommand{\Bl}{\ensuremath{\mathrm{Bl}}}

\newcommand{\din}{\rotatebox{90}{\ensuremath{\in}}}

\newcommand{\reduce}[1]{\scalebox{1}{\ensuremath{#1}}}

\newcommand{\Vol}{\ensuremath{\mathrm{Vol}}}
\newcommand{\bracket}[1]{\ensuremath{\langle #1 \rangle}}

\DeclareMathOperator{\Aut}{Aut}

\DeclareMathOperator*{\smallprod}{\reduce{\prod}}

\DeclareMathOperator*{\smallcup}{\reduce{\bigcup}}

\DeclareMathOperator{\Ric}{Ric}

\DeclareMathOperator{\Pic}{Pic}

\def\bs{\boldsymbol}

\def\D{\Delta}

\def\O{\mathcal O}
\def\P{\mathbb{P}}

\begin{document}
%%%%%%%%%%%%%%%%%%%%%%%%%%%%%%%%%%%%%%%%%%%%%%%%%%%%
\title[CKE Projective Bundles]{Families of projective bundles that admit coupled K\"ahler-Einstein metrics but no K\"ahler-Einstein metrics}

\author{Naoto Yotsutani}
\address{Department of Mathematics, Faculty of Science, Shizuoka University, $836$ Ohya, Suruga-ku, Shizuoka-shi, Shizuoka, $422$-$8529$, Japan}
\email{yotsutani.naoto@shizuoka.ac.jp}
\address{IMAG, Universit\'{e} de Montpellier, $499$-$554$ Rue du Truel $9$, $34090$ Montpellier, France}
\email{naoto.yotsutani@umontpellier.fr}

\makeatletter
\@namedef{subjclassname@2020}{%
  \textup{2020} Mathematics Subject Classification}
\makeatother

\subjclass[2020]{Primary: 14L24, Secondary: 14M25, 53C55}
\keywords{Coupled K\"ahler-Einstein metrics, Automorphism group, Fano varieties} \dedicatory{}
%%%%%%%%%%%%%%%%%%%%%%%%%%%%%%%%%%%%%%%%%%%%%%%%%%%%%%
\date{\today}

\maketitle
%%%%%%%%%%%%%%%%%%%%%%%%%%%%%%%%%%%%%%%%%%%%%%%%%%

\noindent{\bfseries Abstract.}
We construct two families of projective bundles that admit two-coupled K\"ahler-Einstein (cKE) metrics but do not ordinary K\"ahler-Einstein metrics in
sufficiently high dimensions. We also prove that no such examples arise among toric Fano threefolds, for any decomposition of $c_1(X)$, when
considering the natural projection onto $\P^1$.

%%%%%%%%%%%%%% Sec 1 %%%%%%%%%%%%%%%%%%%%%%%%
\section{Introduction}
Let $X$ be a Fano manifold. A K\"ahler metric $\omega\in c_1(X)$ is called {\emph{K\"ahler-Einstein}} (KE) if
\[
\Ric(\omega)=\omega.
\]
Following the resolution of the Yau-Tian-Donaldson conjecture, the existence of a K\"ahler-Einstein metric on $X$ is equivalent to K-polystability.
In particular, many Fano manifolds do not admit K\"ahler-Einstein metrics due to algebro-geometric instability or the non-reductivity of their automorphism groups $\Aut(X)$.
It is therefore natural to consider broader notions (than KE metrics) of canonical K\"ahler metrics that still reflect the geometry of the first Chern class.

One such notion is that of a {\emph{coupled K\"ahler-Einstein}} (cKE) metric, introduced by Hultgren and Witt Nystr\"om \cite{HWN19} and further developed in subsequent works \cite{H19, FH24}. Given a decomposition
\[
c_1(X)=\alpha_1+\alpha_2,
\]
a pair of K\"ahler metrics $(\omega_1, \omega_2)$, with $[\omega_i]=\alpha_i$, is called a {\emph{two-coupled K\"ahler-Einstein metric}}
if it satisfies the coupled system
\[
\Ric(\omega_1)=\Ric(\omega_2)=\omega_1+\omega_2.
\]
More generally, given a decomposition 
\begin{equation}\label{eq:decomp}
c_1(X)=\alpha_1+\dots + \alpha_\ell
\end{equation}
into K\"ahler classes, one may seek an $\ell$-tuple of K\"ahler metrics $(\omega_1, \dots, \omega_\ell)$ satisfying a corresponding 
system of coupled Monge-Amp\`ere equations. 
When $\ell=1$, this reduces to the ordinary K\"ahler-Einstein problem, whereas for $\ell \geq 2$, genuinely new phenomena arise.
In particular, there exist Fano manifolds that do not admit KE metrics but do admit cKE metrics.
The uniqueness theorem for cKE metrics was established in \cite{HWN19}. It may be viewed as a natural generalization of the classical Band--Mabuchi uniqueness theorem for ordinary KE metrics.

In the case of a negative canonical class (i.e., $c_1(X)<0$), the decomposition $\eqref{eq:decomp}$ is assumed to be fixed.
Then there exists a unique cKE tuple $(\omega_1, \dots, \omega_\ell)$ in the prescribed K\"ahler classes. The proof relies on the strict convexity of the coupled Ding functional.

In the Fano case (i.e., $c_1(X)>0$), the decomposition $\eqref{eq:decomp}$ is again fixed. 
If a cKE metric exists, then it is unique up to the identity component $\Aut^0(X)$ of the automorphism group. More precisely, if $(\omega_1, \ldots, \omega_\ell)$ and $(\omega'_1, \ldots, \omega'_\ell)$ are two cKE tuples corresponding to the same decomposition of $c_1(X)$, 
then there exists $g\in \Aut^0(X)$ such that 
\[
g^*\omega_i=\omega'_i,  \qquad i=1,\dots, \ell.
\]
In particular, if $\Aut^0(X)=\set{1}$, then the cKE metric is unique.

In an earlier version of the present paper \cite{Y26}, the author considered certain projective bundles and proved that they do not admit ordinary KE metrics, while all of them admit two-coupled KE metrics in dimensions up to six.
More precisely, these projective bundles are
\begin{equation}\label{eq:ProjBdls}
X_k=\P_{\P^k\times \P^{k-1}}(\O\oplus \O(-1,1)), \qquad 
Y_k=\P_{\P^{k+1}\times \P^{k-1}}(\O\oplus \O(-1,1)),
\end{equation} 
for every integer $k\geq 2$, with $\dim X_k=2k$, $\dim Y_k=2k+1$, respectively. 
Our main interest lies in manifolds that
\begin{itemize}
\item do {\emph{not}} admit ordinary K\"ahler-Einstein metrics, but
\item {\emph{do}} admit coupled K\"ahler-Einstein metrics for a suitable decomposition of $c_1(X)$.
\end{itemize}
Based on the proof that $X_3$ and $Y_2$ satisfy these properties, the author conjectured that the same should hold for every integer $k\geq 2$ \cite[Conjecture 1.3]{Y26}. 
The purpose of the present paper is to prove this conjecture for all sufficiently large $k$. 
%Main Thm
\begin{theorem}\label{thm:main}
For $k\geq 2$, let $X_k$ and $Y_k$ be the families of projective bundles defined in $\eqref{eq:ProjBdls}$.
Then $X_k$ and $Y_k$ do not admit KE metrics, altough their automorphism groups are reductive.
Moreover, there exist decompositions
\begin{equation}\label{eq:decomp}
c_1(X_k)=\alpha_{1,k}+\alpha_{2,k}, \qquad c_1(Y_k)=\beta_{1,k}+\beta_{2,k}
\end{equation}
such that $(\alpha_{1,k}, \alpha_{2,k})$ and $(\beta_{1,k}, \beta_{2,k})$ admit two-coupled KE metrics for all sufficiently large $k$.
The statement also holds for $k=2$ (that is, for $X_2$ and $Y_2$) and for $X_3$.
\end{theorem}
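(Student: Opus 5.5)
These are toric Fano manifolds, so I will argue entirely on the polytope side. The criterion I will use is the toric criterion for coupled K\"ahler--Einstein metrics of Hultgren \cite{H19}. Write $c_1(X)=\alpha_1+\alpha_2$, let $P_1$, $P_2$ be the corresponding moment polytopes, and normalize so that the Minkowski sum $P_1+P_2$ is the anticanonical polytope $P_{-K_X}$. Then a cKE pair exists if and only if the barycenters satisfy
\[
\mathrm{bar}(P_1)+\mathrm{bar}(P_2)=0 .
\]
For $\ell=1$ this reduces to the Wang--Zhu criterion $\mathrm{bar}(P_{-K_X})=0$.

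\textbf{Step 1: Non-existence of KE metrics and reductivity.} I will first write down the fan of $X_k$ explicitly. Its rays are $\pm e_{n}$ from the $\P^1$-fiber, the rays $e_1,\dots,e_k,\,-\sum e_i$ of $\P^k$, and the rays of $\P^{k-1}$, where the rays of the base are twisted in the fiber direction by the degrees $(-1,1)$. The rays of $Y_k$ are obtained in the same way. Next I will show that the fan is centrally symmetric with respect to the roots, or equivalently that the Demazure roots come in pairs $\pm m$. This gives reductivity of $\Aut^0$. An alternative route is the general fact that $\P(\O\oplus L)$ has reductive automorphism group when $L$ is neither effective nor anti-effective, and here $L=\O(-1,1)$ is neither. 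For non-existence, I will compute $\mathrm{bar}(P_{-K})$. By the symmetry of each projective-space factor, the barycenter lies in a two-dimensional (or low-dimensional) invariant subspace. The fiber coordinate $t\in[-1,1]$ is then weighted by $\deg(-K_{\text{base}}+tL)^{\text{top}}$, which is a polynomial in $t$ obtained by a mixed-volume computation. For $X_k$ this weight is
\[
\int (k+1-t)^k (k+t)^{k-1}\cdots ,
\]
which is not symmetric in $t$, so the fiber component of the barycenter is nonzero.

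\textbf{Step 2: An ansatz for the decomposition.} I will look for the decomposition inside the torus-invariant family
\[
\alpha_1=a\,\xi + b\,H_1 + c\,H_2,\qquad \alpha_2=c_1-\alpha_1,
\]
where $\xi$ is the tautological class and $H_1,H_2$ are the hyperplane classes pulled back from the base. By symmetry only the invariant components of the barycenters matter, and these reduce to one-dimensional integrals over the fiber coordinate with polynomial weights $(\beta_1-s)^{k}(\beta_2+s)^{k-1}$. The condition $\mathrm{bar}(P_1)+\mathrm{bar}(P_2)=0$ thus becomes a small system of equations in $(a,b,c)$: one equation in the fiber direction and possibly one in the base direction. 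The base-direction equations are automatically satisfied by the $S_{k+1}\times S_k$-symmetry of the factors.

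\textbf{Step 3: Solving for large $k$ and for small $k$.} This is the main obstacle. I will rescale the fiber coordinate and apply a Laplace-type asymptotic. For large $k$ the weight $(\beta_1-s)^{k}(\beta_2+s)^{k-1}$ concentrates near its maximum, so each barycenter is asymptotically the critical point of
\[
k\log(\beta_1-s)+(k-1)\log(\beta_2+s),
\]
corrected by $O(1/k)$ terms. The leading-order equation should have a solution in the interior of the admissible (K\"ahler) cone with nonzero derivative. I will then apply the implicit function theorem, or an intermediate value argument along a one-parameter family such as $\alpha_1=\lambda c_1+\mu D$ for a suitable divisor $D$, to obtain a genuine solution for all $k\ge k_0$. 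For this I need explicit error bounds, for example via Beta-function identities, since the integrals are Beta integrals over intervals. For $X_2$, $Y_2$ and $X_3$ I will instead evaluate the finitely many explicit polynomial integrals directly, exhibit rational or algebraic parameters, and check positivity of both classes. The delicate point throughout is to keep both $\alpha_1$ and $\alpha_2$ K\"ahler while sign-changing the fiber barycenter sum. I will achieve this with an intermediate-value argument: at one boundary of the parameter interval the fiber barycenter sum has one sign, and at the other boundary it has the opposite sign, with uniform control in $k$.
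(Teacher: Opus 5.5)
Your framework is the paper's toric route: Hultgren's barycenter criterion, reduction by the symmetry of the two projective-space blocks to a single fiber-direction equation, and the intermediate value theorem in the K\"ahler cone. Your reductivity argument is also fine. The gap is the decisive step, namely producing two admissible points where the barycenter sum has opposite signs. You only assert it (``the leading-order equation should have a solution \dots\ with nonzero derivative''), and the mechanisms you propose fail as described.

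When the base side lengths of a class are of order $k$, as in the family $P_c$ with $A(u)=\tfrac{k+2}{2}-u$ and $B_c(u)=u+kc$, the log-weight has slope $O(1)$ and curvature $O(1/k)$ on the bounded fiber interval. So there is no Laplace concentration at a critical point. Instead the normalized weight converges to $e^{(1/c-2)u}$, and $M_k(c)\to m(1/c-2)$ with $m(a)=\tfrac12\coth\tfrac a2-\tfrac1a$. The limit $m(1/c-2)+m(1/(1-c)-2)$ is positive for $c\neq\tfrac12$, because $m$ is odd and strictly increasing and $\tfrac1c+\tfrac1{1-c}>4$. It vanishes to second order at $c=\tfrac12$. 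Hence the only leading-order root is degenerate and the implicit function theorem is unavailable. Moreover $F_k(c)=F_k(1-c)$, so the two ends of such a symmetric interval have the \emph{same} sign, and a boundary-to-boundary intermediate value argument cannot work either.

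What is needed is an exact sign at the symmetric point for every $k$, played against a limit at an off-centre point. The paper gets the first from the identity $\frac{d}{du}\bigl(A^{k+2}B_{1/2}^{k}\bigr)=-2(k+1)uA^{k+1}B_{1/2}^{k-1}$. This reduces $F_k(\tfrac12)<0$ to the inequality $(k+1)^{2k+2}>(k+3)^{k+2}(k-1)^k$. The same kind of computation is what your non-KE step actually requires, since a weight that is not symmetric in $t$ can perfectly well have zero barycenter. The second ingredient is the limit $F_k(\tfrac14)\to m(2)+m(-\tfrac23)=1-\cosh(\tfrac13)/\sinh 1>0$. The intermediate value theorem between $\tfrac14$ and $\tfrac12$ then finishes. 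The horospherical proof instead uses a point where one class has bounded base size ($a'_c=3$); there Laplace concentration at an endpoint genuinely occurs.

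Because $F_k(\tfrac12)\to 0$ and the limit vanishes to second order there, this argument gives the sign change only for $k\gg 1$, not with ``uniform control in $k$''. The cases $X_2$, $Y_2$, $X_3$ need separate explicit input. The paper cites earlier work, Hultgren for $D_{19}$, and Delcroix for the family $X_k$; your proposal leaves these cases unexecuted.
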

As we explain below, the existence of cKE metrics for the family $X_k$ follows from Delcroix's work \cite[Theorem 1.4]{Del22}.
Moreover, $X_2$ is the toric Fano fourfold $D_{19}$ in Batyrev's notation \cite{Bat99}, and hence the result also follows from Hultgren's theorem
\cite[Theorem 2]{H19}. The existence of cKE metrics for the family $Y_k$ is established in Section \ref{sec:main}.
For the cases $Y_2$ and $X_3$,  we refer the reader to Proposition 3.2 and Theorem 3.4 of \cite{Y26}.

%Rem
\begin{remark}\rm
To prove the existence of cKE metrics for the families $X_k$ and $Y_k$ for arbitrary $k$, we need to take the limit as $k\to \infty$ in our argument.
See Sections \ref{sec:horospherical}--\ref{sec:toric} for further details.
It does not seem straightforward to determine an explicit $k_0$ such that $X_k$ and $Y_k$ admit suitable decompositions $\eqref{eq:decomp}$ for every integer $k\geq k_0$.
\end{remark}
Here we briefly explain the relationship between Theorem \ref{thm:main} and \cite[Theorem 1.4]{Del22}.
See Lemma \ref{lem:BlowUp} for more details. 

Let $V=A\oplus B$ be a direct sum decomposition of complex vector spaces,  where $A=\C^p$, $B=\C^q$, $n=p+q$.
Consider the rational map
\[
\pi: \P(V) \dasharrow \P(A)\times \P(B), \qquad [a+b]\longmapsto ([a],[b]).
\] 
This map is undefined precisely along the disjoint union $\P(A)\cup\P(B)$. It therefore extends to a morphism
\[
\widetilde{\pi}:\Bl_{\P(A), \P(B)}(\P(V)) \longrightarrow \P(A)\times \P(B).
\]
The fiber over a point $([a],[b])$ is the projective line $\P(\mathrm{span}_\C\set{a,b})$. 
Hence $\widetilde{\pi}$ is a $\P^1$-bundle over $\P(A)\times \P(B)$. More precisely, its fiber is $\P(\O(-1,0)\oplus \O(0,-1))$. Thus we have
\[
\Bl_{\P(A), \P(B)}(\P(V)) \cong \P_{\P^{p-1}\times \P^{q-1}}(\O(-1,0)\oplus \O(0,-1))
\]
and, after tensoring by $\O(1,0)$,
\[
\P_{\P^{p-1}\times \P^{q-1}}(\O(-1,0)\oplus \O(0,-1)) \cong \P_{\P^{p-1}\times \P^{q-1}}(\O\oplus \O(-1,1)).
\] 
Taking $p=k$ and $q=k+1$, so that $n=2k+1$, we obtain
\[
\Bl_{\P^{k-1},\P^{k}}(\P^{2k})\cong \P_{\P^{k}\times \P^{k-1}}(\O\oplus \O(-1,1))=:X_k.
\]
This family was essentially considered in \cite[Section 2.6]{Del22}. We therefore focus on the second family $Y_k$ in the present paper.
We prove Theorem \ref{thm:main} in two different ways:
\begin{itemize}
\item From the perspective of horospherical geometry, following \cite{Del22}, we prove the existence of cKE metrics by a continuity argument (Section \ref{sec:horospherical}).
\item From the perspective of toric geometry, we apply Hultgren's toric criterion for cKE metrics to verify the assertion.
\end{itemize}
Proposition \ref{prop:ProjBdls} shows that $\Aut^0(X_k)$ and $\Aut^0(Y_k)$ are reductive for every integer $k\geq 2$.
In the final section, we investigate the existence problem of cKE metrics on toric Fano threefolds.

\vskip 7pt

\noindent{\bfseries Acknowledgements.}
I would like to thank Professors Hiroshi Sato, Benjamin Nill and Thibaut Delcroix for their helpful comments on an earlier version of this manuscript. 
In particular, I am grateful to Professor Nill for kindly suggesting a practical approach to \cite[Conjecture 1.3]{Y26} from a toric-geometric viewpoint. 
I also would like to thank the anonymous referee for bringing to my attention the closely related work of Delcroix \cite{Del22}.

This work was partially supported by JSPS KAKENHI Grants JP$22$K$03316$, $24$KK$0252$, and by NSFC Grant $12571058$.

%%%%%%%%%%%Sec 2
\section{Reductivity of Automorphism Groups of toric Fano varieties}\label{sec:automorphism}
Let $G$ be a connected linear algebraic group over $\C$, and let $G_u$ denote its unipotent radical (the maximal connected normal unipotent subgroup). 
Recall that $G$ is {\emph{reductive}} if and only if $G_u$ is trivial.

We begin by reviewing known results concerning the reductivity of automorphism groups of  Fano varieties.

Let $X$ be a Fano manifold, and denote by $\Aut^0(X)$ the connected component of the identity in $\Aut(X)$.
By Matsushima's theorem \cite{Mat57}, if $X$ admits a K\"ahler-Einstein metric in $c_1(X)$, then the Lie algebra of holomorphic vector fields on $X$ is reductive;
equivalently, $\Aut^0(X)$ is reductive. This result was later generalized to the coupled setting: 
if $X$ admits a coupled K\"ahler-Einstein (cKE) metric, then $\Aut^0(X)$ is reductive (\cite[Corollary $1.6$]{HWN19}).

Futaki \cite{Fut83} introduced a Lie algebra character, now known as the {\em{Futaki invariant}}, which vanishes identically 
if $X$ admits a K\"ahler-Einstein metric. 
In general, the vanishing of the Futaki invariant is a necessary condition for the existence of KE metrics.

In the toric case, Wang and Zhu \cite{WZ04} proved that this condition is also sufficient: 
a toric Fano manifold admits a KE metric {\emph{if and only if}} its Futaki invariant vanishes.
Moreover, Mabuchi \cite{Mab87} showed that for a toric Fano manifold the Futaki invariant can be computed as the barycenter of the associated reflexive Delzant polytope. 

Combining these results, one sees that if the barycenter of the reflexive Delzant polytope $P$ associated with a toric Fano manifold $X$ is zero, then $X$ admits a KE metric and hence $\Aut^0(X)$ is reductive.
From a more structural viewpoint, the set of roots (known as {\em Demazure's roots}) of a complete toric variety $X$ plays a crucial role in determining $\Aut(X)$. 
From this perspective, Nill \cite{Ni06} proved that the automorphism group of a Gorenstein toric Fano variety is reductive whenever the barycenter of the associated (not necessarily Delzant) reflexive polytope is zero, regardless of whether $X$ admits a KE metric.
%Thm
\begin{theorem}[\cite{Ni06}]
Let $X$ be a Gorenstein toric Fano variety, and $P \subset M_{\R}$ its associated reflexive polytope.
If the barycenter of $P$ is $\bf 0$, then $\mathrm{Aut}^0(X)$ is reductive. 
\end{theorem}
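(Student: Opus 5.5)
The plan is to reduce reductivity to a combinatorial symmetry of the Demazure roots, and then derive that symmetry from the vanishing barycenter by a volume-slicing argument.

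\textbf{Step 1 (reduction to roots).} Let $N$ be the lattice of one-parameter subgroups, $M$ its dual, and $u_\rho\in N$ the primitive ray generators of the fan of $X$. I will use the Demazure description of $\Aut^0(X)$, in Cox's generalization to complete (possibly singular) toric varieties. The set of roots is
\[
R=\set{m\in M : \exists\,\rho \text{ with } \bracket{m,u_\rho}=-1 \text{ and } \bracket{m,u_{\rho'}}\geq 0 \ \forall \rho'\neq\rho}.
\]
The Lie algebra of $\Aut^0(X)$ is $\mathrm{Lie}(T)\oplus\bigoplus_{m\in R}\C\,x_m$. The reductive part corresponds to $S=R\cap(-R)$, and the unipotent radical is generated by the root subgroups $U_m$ with $m\in R\setminus S$. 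It therefore suffices to show that $m\in R$ implies $-m\in R$.

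Since $X$ is Gorenstein Fano, its polytope is
\[
P=\set{x\in M_\R : \bracket{x,u_\rho}\geq -1 \ \forall\rho},
\]
which is reflexive, and every $u_\rho$ is the inner normal of a facet $F_\rho=P\cap\set{\bracket{x,u_\rho}=-1}$.

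\textbf{Step 2 (slicing and translation).} Fix a root $m$ with distinguished ray $\rho$. For $h\in[-1,h_{\max}]$, where $h_{\max}=\max_P u_\rho$, let $P_h=P\cap\set{\bracket{x,u_\rho}=h}$ and $f(h)=\Vol_{n-1}(P_h)$.

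The root conditions give the following translation property. If $x\in P_h$ with $h\geq 0$, then $x+m$ satisfies $\bracket{x+m,u_\rho}=h-1\geq -1$, and $\bracket{x+m,u_{\rho'}}\geq -1$ for every $\rho'\neq\rho$. Hence
\[
P_h+m\subset P_{h-1}, \qquad\text{so}\qquad f(h)\leq f(h-1) \quad\text{for } h\in[0,h_{\max}].
\]
The barycenter hypothesis gives $\int_{-1}^{h_{\max}} h\,f(h)\,dh=0$. I also have Brunn--Minkowski concavity of $f^{1/(n-1)}$ available.

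\textbf{Step 3 (extremality forces symmetry).} I will combine the translation inequality with concavity to prove $\int h f\,dh\geq 0$, with equality only if $h_{\max}=1$ and $P_h+m=P_{h-1}$ for all $h\in[0,1]$. The natural route is to pair the contribution of $P_h$ with that of $P_{h-1}$ via the injection $x\mapsto x+m$.

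In the equality case, $P$ is a ``prism'' in the direction $m$. Then $P\cap\set{u_\rho=1}$ is a facet, so $-u_\rho=u_\sigma$ is a ray generator. Every other facet normal $u_{\rho'}$ must satisfy $\bracket{m,u_{\rho'}}=0$, since otherwise the translate would lose volume against that facet. Consequently $\bracket{-m,u_\sigma}=-1$ and $\bracket{-m,u_{\rho'}}=0$ for all $\rho'\neq\rho,\sigma$, which shows $-m\in R$.

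\textbf{Main obstacle.} I expect the hard part to be the inequality in Step 3. Monotonicity of $f$ alone does not control the sign of the mean, since it only yields mean $\leq$ midpoint. The first thing I would try is therefore the direct translation trick on the whole upper half: compare $\int_{P\cap\{u_\rho\geq 0\}} u_\rho$ with the integral over its translate by $m$, which lies inside $P\cap\set{u_\rho\geq -1}$, and use lattice integrality of $m$ and $u_\rho$ to handle the boundary slab $-1\leq u_\rho<0$. If that fails, I would switch to integrating the functional $m$ itself rather than $u_\rho$, since $\bracket{m,\cdot}$ is bounded below on every facet except $F_\rho$.
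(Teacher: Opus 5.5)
The paper does not prove this statement itself; it is quoted from Nill. Judged on its own, your proposal has a genuine gap, exactly where you expected it. The inequality in Step 3 is false. Through $f$, the only consequence of $b_P=0$ you can see is $\int hf\,dh=0$, i.e. $\langle b_P,u_\rho\rangle=0$, and the sign of $\langle b_P,u_\rho\rangle$ is not controlled by the root at all.

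\begin{itemize}
\item \textbf{Your inequality fails.} Take $\Bl_{\mathrm{pt}}\P^2$, with rays $e_1,e_2,-e_1-e_2,e_1+e_2$ and $P=\{x_1\ge -1,\ x_2\ge -1,\ -1\le x_1+x_2\le 1\}$. Then $m=(0,1)$ is a root for $u_\rho=-e_1-e_2$, and it is unipotent. With $h=-(x_1+x_2)$ one gets $f(h)=2-h$ on $[-1,1]$, so $\int hf\,dh=-\tfrac23<0$.
\item \textbf{The reverse inequality fails too.} Take the reflexive polygon $P=\mathrm{conv}\{(-1,-1),(1,-1),(1,2),(0,1)\}$, with inner normals $(0,1),(-1,0),(1,-1),(2,-1)$. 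Here $m=(0,-1)$ is a unipotent root for $u_\rho=(0,1)$, yet $\int_P x_2\,dx=\tfrac13>0$.
\item \textbf{Your equality case is wrong.} For $\P^2$, $P=\mathrm{conv}\{(-1,-1),(2,-1),(-1,2)\}$ has barycenter $0$, and $m=(0,-1)$ is a root for $u_\rho=(0,1)$. But $h_{\max}=2$ and $f(h)=2-h$, so $P$ is not a prism in direction $m$.
\end{itemize}

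So no one-sided inequality of the kind you propose can hold. Your fallback of integrating ``$\langle m,\cdot\rangle$'' over $P$ is not even defined, since $m$ and $P$ both live in $M$.

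What is missing is the rest of the barycenter: its components along $\ker u_\rho$, combined with concavity of the ``roof'' over $F_\rho$. Here is how they close the argument.

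\begin{itemize}
\item \textbf{Cylinder structure.} Write $x=y-hm$ with $y\in\ker u_\rho$ and $h=\langle x,u_\rho\rangle$. Your translation property in its strong form says $x+tm\in P$ for $0\le t\le h+1$. Hence every fiber in direction $m$ starts on $F_\rho$, and $P=\{y-hm : y\in Y,\ -1\le h\le g(y)\}$ with $Y=F_\rho-m$.
\item \textbf{The roof function.} Here $g(y)=\min_{c_{\rho'}>0}(1+\langle y,u_{\rho'}\rangle)/c_{\rho'}$, where $c_{\rho'}=\langle m,u_{\rho'}\rangle\in\Z_{\ge 0}$. So $g$ is concave, $0$ lies in the interior of $Y$, and $g(0)=1/\max c_{\rho'}\le 1$.
\item \textbf{Using all of $b_P=0$.} Let $\mu$ be the probability measure on $Y$ with density proportional to $g+1$. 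The $\ker u_\rho$-components of $b_P=0$ give $\int y\,d\mu=0$. The $u_\rho$-component gives $\int_Y (g^2-1)\,dy=0$, i.e. $\int g\,d\mu=1$.
\item \textbf{Jensen.} Concavity gives $g(0)\ge\int g\,d\mu=1$. Hence $g(0)=1$ with equality in Jensen, so $g$ is affine on $Y$.
\item \textbf{Conclusion.} Each facet $F_{\rho'}$ with $c_{\rho'}>0$ lies over a full-dimensional piece of $Y$ on which $g=(1+\langle y,u_{\rho'}\rangle)/c_{\rho'}$. Affineness of $g$ therefore forces exactly one such $\rho'=\sigma$, with $c_\sigma=1$. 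Then $\langle -m,u_\sigma\rangle=-1$, $\langle -m,u_\rho\rangle=1$, and $\langle -m,u_{\rho'}\rangle=0$ for all other $\rho'$. So $-m\in R$, and your Step 1 finishes the proof.
\end{itemize}
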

However, the converse does not hold in general, even in the smooth toric Fano case. %where $X$ is toric Fano {\emph{manifolds}}. 
In particular, we have the following result in low dimensions.
%Prop
\begin{proposition}[\cite{Y17}, Proposition 4.3]\label{prop:Aut}

\begin{enumerate}
\item In dimension three, there exists a unique non-KE toric Fano $3$-fold whose automorphism group is reductive, namely the  
variety of type $\mathcal {F}_2$ in Batyrev's notation \cite{Bat99}.
\item In dimension four, the toric Fano $4$-folds $D_{19}, H_{10}, J_2$ and $Q_{17}$ have reductive automorphism groups, 
although none of them admits a KE metric in $c_1(X)$.
\end{enumerate}
\end{proposition}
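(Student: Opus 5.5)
This proposition is quoted from \cite[Proposition 4.3]{Y17}. To reprove it, I would run a finite case check over the classification of smooth toric Fano varieties in dimensions three and four. Batyrev's list \cite{Bat99} has 18 threefolds and 124 fourfolds. For each one, two things must be decided: whether $\Aut^0(X)$ is reductive, and whether $X$ admits a KE metric.

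\textbf{Step 1 (KE criterion).} By Wang--Zhu \cite{WZ04} combined with Mabuchi \cite{Mab87}, $X$ is KE exactly when the barycenter of its reflexive polytope $P\subset M_\R$ is $\bf 0$. So for each polytope I compute the barycenter from its vertex description. This is a direct volume integral, or it can be done by triangulating $P$ into simplices through the origin and taking the volume-weighted average of the simplex barycenters.

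\textbf{Step 2 (reductivity via Demazure roots).} For a complete toric variety with fan $\Sigma$ and primitive ray generators $v_\rho$, the Demazure roots are
\[
R=\{m\in M:\ \exists\,\rho\ \text{with}\ \langle m,v_\rho\rangle=-1,\ \langle m,v_{\rho'}\rangle\ge 0\ \text{for all}\ \rho'\neq\rho\}.
\]
Let $S=R\cap(-R)$ be the semisimple roots and $R_u=R\setminus S$. Then $\Aut^0(X)$ is reductive iff $R_u=\emptyset$, i.e.\ iff $R=-R$. For a reflexive polytope whose vertex set equals $\{v_\rho\}$ up to duality, the roots are exactly the lattice points in the relative interiors of facets of $P$. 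Concretely, $m$ lies on the facet $\langle\cdot,v_\rho\rangle=-1$ and strictly inside it. So reductivity reduces to the statement that the set of interior facet lattice points of $P$ is centrally symmetric. I would check this for every entry in the list, presumably by computer or with tables of the fans.

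\textbf{Step 3 (intersection).} I list the varieties that are non-KE (nonzero barycenter) but reductive ($R=-R$). In dimension three the claim is that only $\mathcal F_2$ survives. Its roots should come in symmetric pairs from a $\P^1$-bundle-type symmetry, while its barycenter is nonzero. In dimension four the survivors should be $D_{19}, H_{10}, J_2, Q_{17}$. For $D_{19}=X_2$, the introduction identifies it as $\P_{\P^2\times\P^1}(\O\oplus\O(-1,1))$, and the check can be done by hand. The roots of the base $\P^2\times\P^1$ lift as $\pm$-pairs, and the fiber direction gives a symmetric pair because the bundle is twisted by a degree $(-1,1)$ line bundle. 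Meanwhile the moment polytope is visibly asymmetric in the fiber coordinate.

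\textbf{Main obstacle.} The real difficulty is the bookkeeping in dimension four, especially making sure no other fourfold slips through. A variety with $R=-R$ has $\Aut^0$ of the form torus times semisimple, and Nill \cite{Ni06} shows that such symmetry often forces barycenter zero. My plan is to first discard every polytope with a unipotent root, which is a fast lattice-point check. Only the small remaining set needs the more expensive barycenter computation, where I expect exactly these four to have nonzero barycenter.
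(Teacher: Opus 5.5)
Your plan matches what stands behind this statement. The paper gives no argument of its own and only cites \cite[Section 4.1]{Y17}. The tools it recalls around the statement (Demazure roots, Lemma~\ref{lem:reductive}, and the Wang--Zhu/Mabuchi barycenter criterion) are exactly those of a finite case check like yours. Your general criteria are correct: roots are the lattice points in the relative interiors of the facets of the reflexive polytope, and reductivity is equivalent to $\mathcal R=-\mathcal R$.

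One concrete correction concerns your hand check of $D_{19}$. The fiber direction does not contribute a symmetric pair of roots; it contributes none.

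\begin{itemize}
\item In the coordinates of Section~\ref{sec:arbitrary} with $k=2$, the fiber rays are $\bs v_4=\bs e_4$ and $\bs v_7=-\bs e_4$.
\item A root with $\langle \bs v_4,m\rangle=-1$ would need $\langle \bs v_5,m\rangle=-1-m_1-m_2\geq 0$ while $m_1,m_2\geq 0$, which is impossible.
\item A root with $\langle \bs v_7,m\rangle=-1$ would need $\langle \bs v_6,m\rangle=-m_3-1\geq 0$ while $m_3\geq 0$, which is also impossible.
\end{itemize}

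In general, the roots attached to the two fiber rays of $\P(\O\oplus L)$ correspond to torus-invariant sections of $L$ and of $L^{-1}$. On $\P^2\times\P^1$, neither $\O(-1,1)$ nor $\O(1,-1)$ has sections. So $\mathcal R$ consists only of the $6+2$ lifted roots of the base. This matches $\dim\Aut^0(D_{19})=\dim\,(\GL_2\times\GL_3)/\C^\times=12=4+8$, using $D_{19}\cong\Bl_{\P^1,\P^2}(\P^4)$.

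Reductivity therefore holds because the twist \emph{kills} the fiberwise roots, which would otherwise be unipotent. A symmetric fiber pair occurs only when both $L$ and $L^{-1}$ have sections, i.e.\ when $L$ is trivial. Since your method is a mechanical lattice-point check, this does not break the plan, but the reason you give is wrong.

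Finally, part (2) only asserts properties of four named fourfolds. It therefore needs just four root computations and four barycenter computations. The exhaustive sweep you identify as the main obstacle is required only in dimension three, for the uniqueness claim in (1). If you do want completeness in dimension four, note that the list in \cite{Bat99} has $123$ entries; the $124$th was found later by Sato, so a sweep must use the corrected list.
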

\begin{proof}
See, \cite[Section 4.1]{Y17}
\end{proof}
%Rem
\begin{remark}\rm
In \cite[Corollary 4]{H19}, Hultgren proved that $D_{19}=\P_{\P^1\times \P^2}(\mathcal O\oplus \mathcal O(-1,1))$ is an example of toric Fano manifolds that does not admit
a KE metric, but for which there exists a decomposition $c_1(X)=\alpha_1+\alpha_2$ (with {\emph{irrational}} coefficients) admitting a cKE metric. 
\end{remark}
Motivated by Proposition \ref{prop:Aut}, one may ask whether the other candidates listed above, namely the toric Fano $3$-fold $\mathcal F_2$ and
the $4$-folds $H_{10}$, $J_2$, $Q_{17}$, admit cKE metrics. Our investigations suggest that, among toric Fano manifolds of dimension $\leq 4$,
$D_{19}$ is the unique example admitting a two coupled decomposition of $c_1(X)$ that yields a cKE metric while failing to admit a KE metric.

%sec2.1
\subsection{Demazure Roots and Reductivity}
For the reader's convenience, we briefly recall the notion of the {\emph{set of roots}} of a complete toric variety, introduced by Demazure.
See \cite[Section $3.4$]{Oda88} and \cite{Ni06} for details.

Let $\Sigma$ be the complete fan associated with a (smooth) toric Fano variety $X$.
Recall that a fan $\Sigma$ in $N_\R$ is {\emph{complete}} if 
\[
\smallcup_{\sigma\in \Sigma}\sigma=N_\R.
\]
For each ray (i.e., one-dimensional cone) $\rho \in \Sigma(1)$, let $\bs v_\rho$ denote its primitive generator. 
The {\emph{set of roots}} of $\Sigma$ is defined by 
\begin{align}\label{eq:Root}
\mathcal R&:=\Set{m\in M  |  {}^\exists \rho\in\Sigma(1) ~\text{~ s.t. ~}~ \bracket{\bs v_\rho,m}=-1,  \bracket{\bs v_{\rho'}, m} \geq 0 ~ \text{ for}~\,  
{}^\forall \rho'\in \Sigma(1)\setminus \set{\rho}}.
 \end{align}
Define
\[
\mathcal S:=\mathcal R \cap \left (  -\mathcal R \right ) =\Set{m \in \mathcal R | -m \in \mathcal R},
\]
which is called the {\emph{set of semisimple roots}}, and
\[
\mathcal U:=\mathcal R \setminus \mathcal S=\Set{m \in \mathcal R | -m \notin \mathcal R}
\]
which is called the {\emph{set of unipotent roots}}. 
For each $m\in \mathcal R$, there exists a corresponding one-parameter subgroup 
\[
\chi_m: \C \to \Aut (X).
\]
Demazure's structure Theorem (\cite[p. $140$]{Oda88}) states that the unipotent radical $G_u$ of $\Aut^0(X)$ is isomorphic to the product 
\[
\smallprod_{m\in \mathcal U}\chi_m(\C).
\]
Moreover, there exists a reductive algebraic subgroup $G_s$, containing $(\C^\times)^n$ as a maximal algebraic torus and having root system $\mathcal S$, such that
\[
\Aut^0(X)=G_u \rtimes G_s.
\]
A complete fan $\Sigma$ is called {\emph{semisimple}} if $\mathcal R=\mathcal S$. 
The following criterion is well known.
%Lem
\begin{lemma}\label{lem:reductive}
The automorphism group $\Aut^0(X)$ of a complete toric variety $X$ is reductive if and only if the associated complete fan $\Sigma$ is semisimple.
\end{lemma}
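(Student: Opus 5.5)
Both implications will be derived from Demazure's structure theorem as recalled above, namely
\[
\Aut^0(X)=G_u\rtimes G_s, \qquad G_u\cong\smallprod_{m\in\mathcal U}\chi_m(\C),
\]
where $G_s$ is reductive, contains the torus $T=(\C^\times)^n$ as a maximal torus, and has root system $\mathcal S$. By definition, $\Aut^0(X)$ is reductive exactly when $G_u$ is trivial. The lemma therefore amounts to the equivalence
\[
G_u=\set{1} \iff \mathcal U=\emptyset,
\]
and $\mathcal U=\emptyset$ is the same as $\mathcal R=\mathcal S$, i.e.\ semisimplicity of $\Sigma$.

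If $\Sigma$ is semisimple, then $\mathcal U=\emptyset$. The product describing $G_u$ is then empty, so $G_u=\set{1}$ and $\Aut^0(X)=G_s$ is reductive.

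For the converse, suppose $\mathcal U\neq\emptyset$ and pick $m\in\mathcal U$. I will show that $\chi_m(\C)$ is a nontrivial subgroup of $G_u$.
\begin{enumerate}
\item The subgroup $\chi_m(\C)$ is a one-parameter additive subgroup normalized by $T$. The torus acts on it through the character $m$: for $t\in T$ we have $t\,\chi_m(s)\,t^{-1}=\chi_m(\chi^{m}(t)\,s)$.
\item The map $\chi_m$ is injective. Concretely, it is built from the derivation $x^m\,\partial/\partial \log x_\rho$ (with $\rho$ the ray for which $\bracket{\bs v_\rho,m}=-1$), which is a nonzero holomorphic vector field on $X$ because $m$ is a root.
\end{enumerate}
Hence $\chi_m(\C)\cong\C$, and $G_u\supseteq\chi_m(\C)$ is nontrivial, so $\Aut^0(X)$ is not reductive.

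An equivalent Lie-algebra formulation of the converse is as follows. The Lie algebra of $\Aut^0(X)$ decomposes as
\[
\mathfrak t\oplus\bigoplus_{m\in\mathcal R}\C\,\xi_m .
\]
If $\mathcal U\neq\emptyset$, then $\mathcal R$ is not symmetric: there is a weight $m$ with $-m\notin\mathcal R$. A reductive Lie algebra with maximal torus $\mathfrak t$ must have a root system symmetric under $m\mapsto -m$, so this is impossible in the reductive case.

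The only real point is this converse direction, i.e.\ that each $\chi_m$ for $m\in\mathcal U$ genuinely contributes a nontrivial factor. I will cite Demazure's theorem (\cite[Section 3.4]{Oda88}) for the identification of $G_u$ with the product and for the injectivity of each $\chi_m$. The rest is bookkeeping with the definitions of $\mathcal R$, $\mathcal S$ and $\mathcal U$.
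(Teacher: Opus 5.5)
Your proposal is correct and takes the same route as the paper. The paper gives no separate proof but presents the lemma as an immediate consequence of Demazure's structure theorem, recalled just before it: $G_u\cong\prod_{m\in\mathcal U}\chi_m(\C)$ with each $\chi_m$ injective, so $G_u$ is trivial exactly when $\mathcal U=\emptyset$, i.e.\ when $\mathcal R=\mathcal S$; your Lie-algebra variant (the roots of a reductive group relative to the maximal torus $T$ are symmetric under $m\mapsto -m$) is also valid but not needed.
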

In the Gorenstein toric Fano case, one has the following useful sufficient condition.
%Lem
\begin{lemma}[\cite{Ni06}, Theorem 5.2]\label{lem:suff}
Let $X$ be a Gorenstein toric Fano variety with the fan $\Sigma$. If
\[
\sum_{\rho\in \Sigma(1)}\bs v_\rho = \bs 0,
\]
then $\Aut^0(X)$ is reductive.
\end{lemma}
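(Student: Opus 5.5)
The plan is to verify directly that the fan $\Sigma$ is semisimple, i.e.\ $\mathcal R=\mathcal S$, and then conclude by Lemma \ref{lem:reductive}. The only input is the linear relation $\sum_{\rho\in\Sigma(1)}\bs v_\rho=\bs 0$, tested against an arbitrary root; the Gorenstein and Fano hypotheses are not needed for this step.

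\emph{Step 1.} Let $m\in\mathcal R$. By the definition \eqref{eq:Root}, there is a ray $\rho\in\Sigma(1)$ with
\[
\bracket{\bs v_\rho,m}=-1, \qquad \bracket{\bs v_{\rho'},m}\geq 0 \quad \text{for all } \rho'\in\Sigma(1)\setminus\set{\rho}.
\]

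\emph{Step 2.} Pair the hypothesis $\sum_{\rho'\in\Sigma(1)}\bs v_{\rho'}=\bs 0$ with $m$. This gives
\[
0=\bracket{\bs v_\rho,m}+\sum_{\rho'\neq\rho}\bracket{\bs v_{\rho'},m}=-1+\sum_{\rho'\neq\rho}\bracket{\bs v_{\rho'},m}.
\]
Hence $\sum_{\rho'\neq\rho}\bracket{\bs v_{\rho'},m}=1$. Every term in this sum is a non-negative integer, because $m\in M$ and $\bs v_{\rho'}\in N$. Therefore there is exactly one ray $\rho_0\neq\rho$ with $\bracket{\bs v_{\rho_0},m}=1$, and $\bracket{\bs v_{\rho'},m}=0$ for all remaining rays $\rho'\notin\set{\rho,\rho_0}$.

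\emph{Step 3.} Now test $-m$ against the definition of $\mathcal R$, using the ray $\rho_0$ as the distinguished ray. We have
\[
\bracket{\bs v_{\rho_0},-m}=-1, \qquad \bracket{\bs v_\rho,-m}=1\geq 0, \qquad \bracket{\bs v_{\rho'},-m}=0 \ \text{ for the other rays}.
\]
So $-m\in\mathcal R$, hence $m\in\mathcal S$. Since $m\in\mathcal R$ was arbitrary, $\mathcal U=\emptyset$ and $\Sigma$ is semisimple. By Lemma \ref{lem:reductive}, or equivalently by Demazure's structure theorem (which makes $G_u$ trivial), $\Aut^0(X)$ is reductive.

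There is no real obstacle here. The only point requiring care is the integrality argument in Step 2: a sum of non-negative integers equal to $1$ must have exactly one term equal to $1$ and all others $0$. This is what forces $-m$ to be a root.
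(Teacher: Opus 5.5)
Your proof is correct. The paper gives no argument for this lemma; it simply quotes \cite[Theorem 5.2]{Ni06}. Your proposal therefore supplies a self-contained proof that uses only the paper's definition \eqref{eq:Root} and Lemma~\ref{lem:reductive}.

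The key step is sound. Pairing $\sum_{\rho}\bs v_\rho=\bs 0$ with a root $m$ gives a sum of non-negative integers equal to $1$. Hence exactly one other ray $\rho_0$ has $\bracket{\bs v_{\rho_0},m}=1$, and all remaining pairings vanish. So $-m$ is a root with distinguished ray $\rho_0$, which gives $\mathcal U=\emptyset$.

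As you note, this proves more than the stated lemma. The only structural input is completeness of $\Sigma$, which is needed to invoke Demazure's structure theorem (in its extension to singular complete toric varieties) via Lemma~\ref{lem:reductive}. The Gorenstein and Fano hypotheses play no role in this direction. Those hypotheses matter for the companion criterion quoted just before it in the paper, namely vanishing of the barycenter of the reflexive polytope $P\subset M_\R$. That is a different condition, and the argument above does not touch it.
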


%Sec2.2
\subsection{Arbitrary dimensional case}\label{sec:arbitrary}
For any integer $k\geq 2$, let $\bs e_i$ $(i=1, \dots, 2k)$ be the standard basis of $N\cong \Z^{2k}$. We consider the complete fan in $N_\R$ whose rays are generated by
\[
\bs v_i=\bs e_i ~~ (i=1, \dots, 2k), \quad \bs v_{2k+1}=\bs v_{2k}-\sum_{i=1}^k \bs v_i, \quad \bs v_{2k+2}=-\sum_{i=k+1}^{2k}\bs v_i, \quad \bs v_{2k+3}=-\bs v_{2k}.
\]
These $\bs v_i$ satisfy the relations
\begin{equation}\label{eq:PR_Gen}
\sum_{i=1}^k \bs v_i+\bs v_{2k+1}=\bs v_{2k}, \qquad \sum_{i=k+1}^{2k-1} \bs v_i+\bs v_{2k+2}=\bs v_{2k+3}, \qquad \bs v_{2k}+ \bs v_{2k+3}=\bs 0.
\end{equation}
This yields that the associated toric variety is
\[
X_k=\P_{\P^k\times \P^{k-1}}(\mathcal O\oplus \mathcal O(-1,1))
\]
with $\dim X_k=2k$. 

Similarly we can construct a $(2k+1)$-dimensional toric variety 
\[
Y_k=\P_{\P^{k+1}\times \P^{k-1}}(\mathcal O\oplus \mathcal O(-1,1))
\]
associated with the fan $\Sigma_k$ whose rays are generated by
%\begin{align*}
\[
\bs v_i=\bs e_i ~~  (i=1, \dots, 2k+1), \quad \bs v_{2k+2}=\bs v_{2k+1}-\sum_{i=1}^{k+1} \bs v_i, \quad
\bs v_{2k+3}=-\sum_{i=k+2}^{2k+1}\bs v_i, \quad 
\bs v_{2k+4}=-\bs v_{2k+1}.
\]
%\end{align*}
%Prop
\begin{proposition}\label{prop:ProjBdls}
The projective bundles $X_k$ and $Y_k$ do not admit K\"ahler-Eintein metrics in their first Chern classes. However, $\Aut^0(X_k)$ and $\Aut^0(Y_k)$ are reductive.
\end{proposition}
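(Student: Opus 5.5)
By Lemma \ref{lem:reductive} and Wang--Zhu/Mabuchi, the proof reduces to two explicit computations with the fans of Section \ref{sec:arbitrary}. Reductivity will come from Lemma \ref{lem:suff} via the sum of the primitive ray generators. Non-existence of KE metrics will come from the fact that, by \cite{WZ04} and \cite{Mab87}, a toric Fano manifold is KE if and only if the barycenter of its reflexive polytope $P=\{m\in M_\R : \bracket{\bs v_\rho,m}\geq -1 \ \forall \rho\}$ vanishes. So I will show that one coordinate of this barycenter is nonzero.

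\emph{Reductivity.} For $X_k$ I sum the generators:
\[
\sum_{i=1}^{2k}\bs e_i+\Bigl(\bs e_{2k}-\sum_{i=1}^k\bs e_i\Bigr)-\sum_{i=k+1}^{2k}\bs e_i-\bs e_{2k}=\bs 0 .
\]
The analogous computation for $\Sigma_k$ (the fan of $Y_k$) gives
\[
\sum_{i=1}^{2k+1}\bs e_i+\bs e_{2k+1}-\sum_{i=1}^{k+1}\bs e_i-\sum_{i=k+2}^{2k+1}\bs e_i-\bs e_{2k+1}=\bs 0 .
\]
Both varieties are smooth toric Fano, hence Gorenstein, so Lemma \ref{lem:suff} gives that $\Aut^0(X_k)$ and $\Aut^0(Y_k)$ are reductive. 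No root computation is needed.

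\emph{Non-KE.} I slice $P$ along the fiber coordinate $t=x_{2k}$ for $X_k$ (respectively $t=x_{2k+1}$ for $Y_k$). The constraints from $\bs v_{2k}$ and $\bs v_{2k+3}$ give $t\in[-1,1]$. For fixed $t$, the slice of $X_k$ splits as a product of two simplices:
\begin{itemize}
\item $\{x_i\geq -1,\ i\le k,\ \sum_{i\le k}x_i\le 1+t\}$, a $k$-simplex of edge length $k+1+t$;
\item $\{x_i\geq -1,\ k<i<2k,\ \sum_{i=k+1}^{2k-1}x_i\le 1-t\}$, a $(k-1)$-simplex of edge length $k-t$.
\end{itemize}
Hence the slice volume is proportional to $f(t)=(k+1+t)^k(k-t)^{k-1}$. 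The $t$-coordinate of the barycenter is then a positive multiple of $\int_{-1}^1 t f(t)\,dt=\int_0^1 t\,(f(t)-f(-t))\,dt$. For $Y_k$ the same slicing gives $f(t)=(k+2+t)^{k+1}(k-t)^{k-1}$.

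\emph{The main step: showing $f(t)>f(-t)$ on $(0,1]$.} I would set
\[
g(t)=\log\frac{f(t)}{f(-t)}=k\log\frac{k+1+t}{k+1-t}-(k-1)\log\frac{k+t}{k-t},
\]
so that $g(0)=0$, and then show $g'>0$. Here
\[
g'(t)=\frac{2k(k+1)}{(k+1)^2-t^2}-\frac{2k(k-1)}{k^2-t^2}.
\]
After cross-multiplying, the sign of $g'$ is that of $(k+1)(k^2-t^2)-(k-1)((k+1)^2-t^2)=k+1-2t^2$, which is positive for $|t|\le1$ and $k\ge 2$. For $Y_k$ the analogous expression is
\[
(k+1)(k+2)(k^2-t^2)-(k-1)k\bigl((k+2)^2-t^2\bigr),
\]
which should likewise be positive on $[0,1]$. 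This is routine: the $t^2$ coefficient is $-4k-2$ and the constant is $4k^3+6k^2$, giving $4k^3+6k^2-(4k+2)t^2>0$. Therefore the barycenter has positive $t$-coordinate, so it is nonzero, and neither $X_k$ nor $Y_k$ admits a KE metric.
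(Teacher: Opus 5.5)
Your argument is correct. The reductivity half matches the paper's proof: the paper sums the same generators, grouping them via the primitive relations \eqref{eq:PR_Gen} to get $2(\bs v_{2k}+\bs v_{2k+3})=\bs 0$, and applies Lemma \ref{lem:suff}. Lemma \ref{lem:reductive} is not actually needed.

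The non-KE half is where you genuinely differ. The paper's proof of this proposition just says ``Clearly''. It only verifies the claim later, for $Y_k$, in Lemma \ref{lem:Mk_negative}, working in the transformed coordinates of Section \ref{sec:toric}. There it uses that the weighted integrand is an exact derivative, $\frac{d}{du}\bigl(A^{k+2}B_{1/2}^{k}\bigr)=-2(k+1)u\,A^{k+1}B_{1/2}^{k-1}$. This gives the barycentre coordinate in closed form and reduces its sign to $(k+1)^{2k+2}>(k+3)^{k+2}(k-1)^k$, which the paper proves by a logarithmic series estimate plus a separate check at $k=2$.

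Your pointwise comparison $f(t)>f(-t)$, via monotonicity of $\log\bigl(f(t)/f(-t)\bigr)$, never evaluates the integral and reduces everything to the sign of a quadratic in $t$. It also treats $X_k$ and $Y_k$ uniformly, whereas the paper gives no computation for $X_k$. The paper's route, in exchange, gives an exact value rather than just a sign.

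There is one arithmetic slip. The constant term of $(k+1)(k+2)(k^2-t^2)-(k-1)k\bigl((k+2)^2-t^2\bigr)$ is $(k^4+3k^3+2k^2)-(k^4+3k^3-4k)=2k^2+4k$, not $4k^3+6k^2$. Your $t^2$ coefficient $-(4k+2)$ is right. The conclusion survives, since $2k^2+4k-(4k+2)t^2\geq 2k^2-2>0$ for $t\in[0,1]$ and $k\geq 2$. However, the margin is much smaller than your stated expression suggests, so correct it in the write-up.
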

\begin{proof}
Clearly, neither $X_k$ nor $Y_k$ admits a KE metric. Thus it suffices to show that the connected components of their automorphism groups are reductive.

From $\eqref{eq:PR_Gen}$, we obtain
\begin{align*}
\sum_{i=1}^{2k+3}\bs v_i&=\bigl( \sum_{i=1}^{k} \bs v_i+\bs v_{2k+1} \bigr)+\bigl( \sum_{i=k+1}^{2k-1} \bs v_i+\bs v_{2k+2} \bigr)+\bs v_{2k}+\bs v_{2k+3}\\
&=2\bigl(\bs v_{2k}+\bs v_{2k+3} \bigr)=\bs 0.
\end{align*}
Thus $\Aut^0(X_k)$ is reductive by Lemma \ref{lem:suff}. The case of $\Aut^0(Y_k)$ is similar.
\end{proof}

%Sec3
\section{Proofs of the main theorem}\label{sec:main}
In this section, we prove Theorem \ref{thm:main} using two different approaches. The first is based on horospherical geometry (Section \ref{sec:horospherical}), while the second uses toric geometry (Section \ref{sec:toric}).
To apply the argument of \cite[Theorem 1.4]{Del22} to our setting, we first establish the following isomorphism:
\begin{equation}\label{isom:ProjBdl}
\mathrm{Bl}_{\P^{p-1},\P^{q-1}}(\P^{n-1})\cong \P_{\P^{p-1}\times \P^{q-1}}(\O\oplus \O(-1,1)).
\end{equation}

%Sec3.1
\subsection{The projective bundle description of blow-up manifolds}
Let $A$ and $B$ be complex vector spaces of dimensions $p$ and $q$, respectively, such that $A\cap B=\set{\bs 0}$.
Let $V:=A\oplus B$, $n:=p+q$. Then $\P(V)=\P^{n-1}$. Let
\[
L_A:=\P(A)\cong \P^{p-1}, \qquad L_B:=\P(B)\cong \P^{q-1}.
\]
Since $L_A\cap L_B=\emptyset$, we define $X_{n,p}:=\Bl_{L_A, L_B}(\P(V))$. We have the following isomorphism.
%Lem
\begin{lemma}\label{lem:BlowUp}
$X_{n,p}\cong \P_{\P(A)\times \P(B)}(\O\oplus \O(-1,1))$
\end{lemma}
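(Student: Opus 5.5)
The plan is to construct the morphism $\widetilde{\pi}$ explicitly, identify $X_{n,p}$ with the total space of a $\P^1$-bundle $\P(E)$, and then normalize $E$ by a twist. We use the convention that $\P(E)$ parametrizes lines in $E$.

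Consider the rational map
\[
\pi:\P(V)\dasharrow \P(A)\times\P(B),\qquad [a+b]\mapsto([a],[b]),
\]
which is defined exactly off $L_A\cup L_B$. Let $\mathcal{L}_A=\O_{\P(A)}(-1)$ and $\mathcal{L}_B=\O_{\P(B)}(-1)$ be the tautological line subbundles, pulled back to $\P(A)\times\P(B)$; in the paper's notation these are $\O(-1,0)$ and $\O(0,-1)$. Put
\[
E:=\mathcal{L}_A\oplus\mathcal{L}_B\subset \P(A)\times\P(B)\times V .
\]
There is a natural morphism $\phi:\P(E)\to\P(V)$ sending a line in $\mathcal{L}_{A,[a]}\oplus\mathcal{L}_{B,[b]}=\C a\oplus\C b$ to its image line in $V$. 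It is well defined because $A\cap B=\set{\bs 0}$, so the span of $a$ and $b$ is two-dimensional.

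\textbf{Step 1: $\phi$ is birational.} Over a point $v=a+b$ with $a\neq 0$ and $b\neq 0$, the fiber of $\phi$ is the single point $([a],[b],[v])$. So $\phi$ restricts to an isomorphism over $\P(V)\setminus(L_A\cup L_B)$.

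\textbf{Step 2: the exceptional loci.} Let $D_A\subset\P(E)$ be the sub-bundle $\P(\mathcal{L}_A\oplus 0)$. It is a section of the $\P^1$-bundle, hence a divisor. The map $\phi|_{D_A}$ is the projection $\P(A)\times\P(B)\to\P(A)=L_A$, so $\phi^{-1}(L_A)=D_A$ set-theoretically, with fibers $\P(B)$. The same holds for $D_B$ and $L_B$. The fibers match the fiber $\P(N_{L_A/\P(V)})\cong\P(B)$ of the exceptional divisor of $\Bl_{L_A}$, since $N_{L_A}\cong\O(1)\otimes B$.

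\textbf{Step 3: scheme-theoretic check and universal property.} The main obstacle is to show that $\phi^{-1}(\mathcal{I}_{L_A})\cdot\O_{\P(E)}$ is invertible, namely equal to $\O(-D_A)$ and not a multiple of it. I would do this in local coordinates. On the chart $a_1=1$ use coordinates $(a_2,\dots,a_p,\;[b],\;t)$, where the point of the fiber is $[a+tb]$. Then $\phi^*$ of the ideal of $L_A$, which is $(b_1,\dots,b_q)$, becomes $t\cdot(b_1,\dots,b_q)$. Since $[b]$ is a point of projective space, one $b_j$ is a unit on each chart, so this ideal is $(t)$, the reduced ideal of $D_A$. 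By symmetry the same holds for $L_B$.

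The universal property of blowing up then gives a morphism $\psi:\P(E)\to X_{n,p}$ over $\P(V)$. This $\psi$ is birational and bijective: it is an isomorphism off the exceptional divisors, and on $D_A$ it maps $\P(A)\times\P(B)$ isomorphically onto $\P(N_{L_A})$. Since $X_{n,p}$ is smooth, hence normal, Zariski's main theorem makes $\psi$ an isomorphism. Alternatively one can write down the inverse directly: $\widetilde{\pi}$ is defined on the blow-up, and the fiber over $([a],[b])$ is $\P(\mathrm{span}_\C\set{a,b})$.

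\textbf{Step 4: twisting.} We have $\P(E)\cong\P(E\otimes M)$ for any line bundle $M$. Taking $M=\O(1,0)$ gives
\[
E\otimes\O(1,0)=\O(0,0)\oplus\O(1,-1),
\]
so $X_{n,p}\cong\P_{\P(A)\times\P(B)}(\O\oplus\O(1,-1))$. Since $\P(\O\oplus L)\cong\P(\O\oplus L^{-1})$, via the twist by $L^{-1}$, this is also $\P(\O\oplus\O(-1,1))$. The factor ordering matches the paper's convention, and the lemma follows.
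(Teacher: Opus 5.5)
Your proof is correct, but it runs in the opposite direction from the paper's.

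The paper starts from the blow-up. It asserts that blowing up $L_A\cup L_B$ resolves the indeterminacy of $\pi$, computes the fibers of $\widetilde{\pi}$ as $\P(\mathrm{span}_\C\set{a,b})$, and notes that these are exactly the fibers of $\P_Z(S_A\oplus S_B)$ at $([a],[b])$. From this it concludes $X_{n,p}\cong\P_Z(S_A\oplus S_B)$, and then twists by $S_B^{-1}=\O(0,1)$ to reach $\O\oplus\O(-1,1)$ in one step.

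You instead start from $\P(E)\subset Z\times\P(V)$, where the map to $\P(V)$ is manifestly a morphism. You check in local coordinates that the pulled-back ideal of $L_A\cup L_B$ is the invertible ideal $\O(-D_A-D_B)$, and obtain $\psi:\P(E)\to X_{n,p}$ from the universal property of blowing up. You finish with Zariski's main theorem. Your chart computation, in the chart $b_j=1$, in fact identifies $\psi$ with the identity map of the standard blow-up chart. That is also the cleanest justification of your assertion that $\psi$ maps $D_A$ isomorphically onto $\P(N_{L_A})$.

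What your route buys is rigor. The paper matches the two $\P^1$-bundles only fiber by fiber, and never writes down a global morphism between $X_{n,p}$ and $\P_Z(S_A\oplus S_B)$. It also never checks that the blow-up actually resolves $\pi$. Your argument supplies both: the resolution comes for free, since $\widetilde{\pi}$ is the bundle projection composed with $\psi^{-1}$. The paper's version is shorter and conveys the geometry more directly.

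Your Step 4 detour through $\O\oplus\O(1,-1)$ and the isomorphism $\P(\O\oplus L)\cong\P(\O\oplus L^{-1})$ is valid but unnecessary. Twisting $E$ by $\O(0,1)$ gives $\O(-1,1)\oplus\O$ immediately.
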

\begin{proof}
For nonzero vectors $a\in A$ and $b\in B$, the point $[a+b]\in \P(V)$ uniquely determines the point $([a],[b])\in \P(A)\times \P(B)$.
Thus, we obtain a rational map
\begin{equation*}\label{map:pi1}
\xymatrix@R=-1ex{
\pi: \P(V) \ar@{-->}[r]& \P(A)\times \P(B)\\
\qquad \!\!\! \din &\!\! \din\\
[a+b] \ar@{|->}[r]& ([a],[b]).
 }
\end{equation*}
The map $\pi$ is undefined precisely on the indeterminacy locus $L_A\cup L_B$.
Since $L_A$ and $L_B$ are disjoint, blowing up their union resolves the indeterminacy of $\pi$ and yields a morphism
\begin{equation}\label{diagram:Extend}
\xymatrix{
\widetilde{\pi}: X_{n,p}=\Bl_{L_A\cup L_B}(\P(V)) \ar@{-->}[d]\ar[r]  &{\P(A)\times \P(B)} \\
\P(V) \ar@{-->}[ur]_-{\pi} & 
}
\end{equation}
extending $\pi$, as illustrated by the diagram $\eqref{diagram:Extend}$.
To determine the fibers of $\widetilde{\pi}$, fix apoint $([a],[b])\in \P(A)\times \P(B)$, and choose representatives $a\in A$ and $b\in B$.
Then %one can see that the fiber is given by
\begin{align*}
\widetilde{\pi}^{-1}([a],[b])&=\Set{[\lambda a+\mu b]\in \P(V)|x=[\lambda : \mu ]\in \P^1} \\
&=\P\bigl(\mathrm{span}_\C\set{a,b}\bigr)\cong\P^1.
\end{align*} 
Therefore, $\widetilde{\pi}: X_{n,p} \to \P(A)\times \P(B)$ is a $\P^1$-bundle.

We now identify the vector bundle defining this $\P^1$-bundle. Set $Z:=\P(A)\times \P(B)$, and let
\[
S_A:=\O_{\P(A)}(-1)\cong \O_Z(-1,0), \qquad  S_B:=\O_{\P(B)}(-1)\cong \O_Z(0,-1)
\]
be the tautological line bundles. For each point $([a],[b])\in Z$, the fiber of $S_A\oplus S_B$ is
\[
(S_A)_{[a]}\oplus (S_B)_{[b]}=\C a\oplus \C b=\mathrm{span}_\C\set{a,b}.
\]
It follows that $X_{n,p}\cong \P_Z(S_A\oplus S_B)$. Since projectivization is unchanged after tensoring by a line bundle,
we have $\P(E)\cong \P(E\otimes M)$. Taking $M=S_B^{-1}=\O_{\P(B)}(1)\cong \O_Z(0,1)$, we obtain
\[
(S_A\oplus S_B)\otimes M=\O_Z\oplus(S_A\otimes S_B^{-1})\cong \O_Z\oplus \O_Z(-1,1).
\]
This proves the desired description $X_{n,p}\cong \P_{\P(A)\times \P(B)}(\O\oplus \O(-1,1))$.
\end{proof}
Taking $p=k$, $q=k+2$, so that $n=2k+2$ in $\eqref{isom:ProjBdl}$, we obtain
\[
\Bl_{\P^{k-1},\P^{k+1}}(\P^{2k+1})\cong \P_{\P^{k-1}\times \P^{k+1}}(\O\oplus \O(-1,1))=:Y_k.
\]
In the following section, we prove Theorem \ref{thm:main} from the perspective of horospherical geometry.

%Sec3.2
\subsection{Horospherical geometric viewpoint}\label{sec:horospherical}
Throughout  this subsection, we use the same notation in \cite[Section 2]{Del22}.

Consider the special subgroup of the block-diagonal group $\GL_p\times \GL_{q}$ given by
\[
G:=\Set{(A,B)\in \GL_p(\C)\times \GL_{q}(\C)| \det A \cdot \det B=1}, \qquad n=p+q.
\]
We regard $G$ as a subgroup of $\GL_n(\C)$ via
$(A,B)\longmapsto \begin{pmatrix}
A & O \\
O & B
\end{pmatrix}$. 
It was proved in \cite[Section 2.2]{Del22} that, under the action of $G$, the manifold 
$X_{n,p}:=\mathrm{Bl}_{\P^{p-1},\P^{q-1}}(\P^{n-1})$ is horospherical.
In particular, its horospherical structure of $X_{n,p}$ is well understood, and the relevant combinatorial data are explicitly described there.

Let $D_{+}$, $D_-$ be the $G$-invariant divisors of $X_{n,p}$, while $D_{\alpha_{2, 1}}$, $D_{\alpha_{p+2, p+1}}$ be the two color divisors
(that is, the $B$-stable but not $G$-stable prime divisors in the spherical-variety terminology, where $B$ is the Borel subgroup of $G$). 
In the geometric picture of a blow-up of $\P^{n-1}$, these colors correspond to the two distinguished divisors related to the two projective subspaces $\P^{p-1}$ and $\P^{q-1}$.  
The Picard group is generated by these four $G$-invariant divisors $D_{\alpha_{2, 1}}$, $D_{\alpha_{p+2, p+1}}$, $D_{+}$ and $D_-$ with one linear relation
\[
D_{+} + D_{\alpha_{p+2, p+1}}=D_- + D_{\alpha_{2, 1}}.
\]
For $a_c$, $a_+$, $a_{-}\in \R$, one can parameterize all the {\emph{special}} $\R$-divisors as
\[
D(a_c, a_{+}, a_{-}):=a_c(D_{\alpha_{2, 1}}+D_{\alpha_{p+2, p+1}})+a_{+}D_{+}+a_{-}D_{-}.
\]
Here, by a special $\R$-divisor, we mean a $G$-invariant $\R$-divisor of the above form.
Note that $D(a_c, a_{+}, a_{-})$ is ample if and only if $a_{+}<a_c$, $a_{-}<a_c$ and $a_{+}+a_{-}>0$.
%%%%%%%%%%%%%
The Duistermaat-Heckman barycenter is a weighted average associated with the moment polytope (or, more generally, the relevant spherical data) of the polarized variety. In this setting, the cKE existence criterion can be expressed as a condition on this barycenter.
The Duistermaat-Heckman barycenter $\mathrm{Bar}(a_c, a_{+}, a_{-})$ corresponding to the ample divisor $D(a_c, a_{+}, a_{-})$ is calculated in \cite[p. $2089$]{Del22}.

We now specialize to $p=k$, $q=k+2$ and $n=2k+2$.
Set
\[
a_c'=k+1-a_c, \qquad a_{+}'=\frac{1}{2}-a_{+}, \qquad a_{-}'=\frac{3}{2}-a_{-}.
\] 
Then the $G$-invariant divisors $D(a_c, a_{+}, a_{-})$ and $D(a'_c, a'_{+}, a'_{-})$ are both ample if and only if
\begin{equation}\label{ineq:ample}
a_{+}<a_c<a_{+}+k+\frac{1}{2}, \qquad a_{-}<a_c<a_{-}+k-\frac{1}{2}, \qquad 0<a_{+}+a_{-}<2.
\end{equation}
By \cite[Theorem 1.2]{DH21}, the existence problem for cKE metrics on $X_{n,p}\cong Y_k$ is reduced to finding $(a_c, a_{+}, a_{-})$ satisfying the barycentric condition
\begin{equation}\label{eq:Bary}  
\frac{\displaystyle 
\int_{-a_{+}}^{a_{-}}t(t+a_c)^{k+1}(a_c-t)^{k-1}dt}{\displaystyle 
\int_{-a_{+}}^{a_{-}}(t+a_c)^{k+1}(a_c-t)^{k-1}dt}+
\frac{\displaystyle 
\int_{-a'_{+}}^{a'_{-}}t(t+a'_c)^{k+1}(a'_c-t)^{k-1}dt}{\displaystyle 
\int_{-a'_{+}}^{a'_{-}}(t+a'_c)^{k+1}(a'_c-t)^{k-1}dt}
-\frac{1}{2}=0
\end{equation}
subject to the ampleness condition $\eqref{ineq:ample}$. For convenience, let $Q_1(a_c, a_{+}, a_{-})$ and $Q_2(a_c, a_{+}, a_{-})$
denote the first and second terms, respectively, in $\eqref{eq:Bary}$, and define
\[
C(a_c, a_{+}, a_{-}):=Q_1(a_c, a_{+}, a_{-})+Q_2(a_c, a_{+}, a_{-})-\frac{1}{2}.
\]
Thus, it suffices to find a triple $(a_c, a_{+}, a_{-})$ satisfying $\eqref{ineq:ample}$ such that $C(a_c, a_{+}, a_{-})=0$.
We first prove the following.
%Lem1
\begin{lemma}\label{lem:positive}
For $(a_c, a_{+}, a_{-})=\bigl(\frac{n}{4}, \frac{1}{4}, \frac{3}{4} \bigr)$, we have $C(a_c, a_{+}, a_{-})>0$.
\end{lemma}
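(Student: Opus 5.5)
The plan is to use the symmetry of the chosen point. With $p=k$, $q=k+2$, $n=2k+2$, the triple $(a_c,a_+,a_-)=\bigl(\tfrac{n}{4},\tfrac14,\tfrac34\bigr)$ gives
\[
a_c'=k+1-\tfrac{k+1}{2}=\tfrac{k+1}{2}=a_c,\qquad a_+'=\tfrac12-\tfrac14=\tfrac14=a_+,\qquad a_-'=\tfrac32-\tfrac34=\tfrac34=a_-.
\]
So the point is fixed by the involution $(a_c,a_+,a_-)\mapsto(a'_c,a'_+,a'_-)$, and therefore $Q_1=Q_2$. The claim $C>0$ then reduces to showing $Q_1>\tfrac14$. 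Write $a:=a_c=\tfrac{k+1}{2}$ and $w(t):=(t+a)^{k+1}(a-t)^{k-1}$, which is positive on $[-\tfrac14,\tfrac34]$ since $a>\tfrac34$. Then $Q_1$ is the barycenter of the weight $w$ on $[-\tfrac14,\tfrac34]$, and the midpoint of that interval is exactly $\tfrac14$. So it suffices to show
\[
\int_{-1/4}^{3/4}\bigl(t-\tfrac14\bigr)w(t)\,dt=\int_0^{1/2}s\bigl(w(\tfrac14+s)-w(\tfrac14-s)\bigr)\,ds>0 .
\]

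Monotonicity of $w$ alone will not work: $(\log w)'$ has the sign of $(k+1)-2kt$, so for large $k$ the function $w$ peaks inside the interval. Instead I would compare the two sides pointwise through
\[
f(s):=\log\frac{w(\tfrac14+s)}{w(\tfrac14-s)}=(k+1)\log\frac{b+s}{b-s}+(k-1)\log\frac{c-s}{c+s},
\qquad b=a+\tfrac14=\tfrac{2k+3}{4},\ c=a-\tfrac14=\tfrac{2k+1}{4}.
\]
We have $f(0)=0$ and
\[
f'(s)=\frac{2(k+1)b}{b^2-s^2}-\frac{2(k-1)c}{c^2-s^2}.
\]
The denominators are positive for $0\le s\le\tfrac12$, so $f'(s)>0$ is equivalent to
\[
(k+1)b\,(c^2-s^2)>(k-1)c\,(b^2-s^2).
\]
This inequality is affine in $s^2$, and the coefficient of $s^2$ is $(k-1)c-(k+1)b<0$. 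Hence the worst case is the endpoint $s^2=\tfrac14$.

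At that endpoint, $c^2-\tfrac14=\tfrac{(2k-1)(2k+3)}{16}$ and $b^2-\tfrac14=\tfrac{(2k+1)(2k+5)}{16}$. The inequality becomes
\[
(k+1)(2k-1)(2k+3)^2>(k-1)(2k+5)(2k+1)^2 .
\]
Expanding both sides, the difference is $8k^3+32k^2+14k-4$, which is positive for all $k\ge 1$. Therefore $f'>0$ on $[0,\tfrac12]$, so $f(s)>0$ for $s\in(0,\tfrac12]$. This gives $w(\tfrac14+s)>w(\tfrac14-s)$, the integral above is positive, $Q_1>\tfrac14$, and so $C=2Q_1-\tfrac12>0$.

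The main obstacle is the middle step: finding a pointwise comparison that holds uniformly in $k$ even though $w$ is not monotone on the interval. The reflected log-ratio $f$ is designed to handle exactly this, and the polynomial check at $s=\tfrac12$ is the only computation I would need to verify carefully.
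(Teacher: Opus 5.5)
Your argument is correct. It shares the paper's first step: the chosen point is fixed by $(a_c,a_+,a_-)\mapsto(a'_c,a'_+,a'_-)$, so $C=2Q_1-\tfrac12$, and one needs $Q_1>\tfrac14$, the midpoint of $[-\tfrac14,\tfrac34]$. You establish the key positivity by a different route, though.

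The paper peels a factor $t+\tfrac{k+1}{2}$ off the weight. It splits the numerator into the manifestly positive term $\int(t-\tfrac14)^2(\cdots)\,dt$ plus $\tfrac{2k+3}{4}$ times the first moment of the lower-degree weight $(t+\tfrac{k+1}{2})^{k}(\tfrac{k+1}{2}-t)^{k-1}$. After the substitutions $s=t-\tfrac14$ and $x=2s$, it identifies that moment with Delcroix's integral $I(2k+1,k)$, whose positivity is imported from \cite[Lemma 2.2]{Del22}.

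You keep the full weight $w$ and prove the stronger pointwise reflection inequality $w(\tfrac14+s)>w(\tfrac14-s)$ for $0<s\le\tfrac12$. This follows from monotonicity of the reflected log-ratio and the explicit bound $8k^3+32k^2+14k-4>0$. I checked the derivative $f'$, the reduction to the endpoint $s^2=\tfrac14$ (the condition is affine in $s^2$ with negative slope), and the polynomial expansion; all are right.

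Your version is self-contained and uniform in $k$. It also avoids matching normalizations with Delcroix's integral, which the paper does only loosely ("shifting if we need"). In the paper, the displayed $w$ carries exponent $k+1$ while the subsequent computation uses exponent $k$, and the power of $2$ after $x=2s$ should be $2^{-(2k+1)}$; neither slip affects the sign. The paper's route, in exchange, is shorter on the page. It ties the computation for $Y_k$ directly to Delcroix's existing estimates for these blow-ups of projective space.
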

\begin{proof}
If we choose $a_c=\frac{n}{4}=\frac{k+1}{2}$, $a_{+}=\frac{1}{4}$, $a_{-}=\frac{1}{4}$, then 
\[
(a'_c, a'_{+}, a'_{-})=\left(\frac{k+1}{2}, \frac{1}{4},  \frac{1}{4} \right)
\]
and this symmetric point satisfies $\eqref{ineq:ample}$. Thus,
\[
C\left(\frac{n}{4}, \frac{1}{4}, \frac{3}{4} \right)=2Q_1\left(\frac{n}{4}, \frac{1}{4}, \frac{3}{4} \right)-\frac{1}{2},
\]
It is therefore enough to show that $Q_1\left(\frac{n}{4}, \frac{1}{4}, \frac{3}{4} \right)> \frac{1}{4}$.
Recall that
\[
Q_1\left(\frac{k+1}{2}, \frac{1}{4}, \frac{3}{4} \right)=
\frac{\int_{-\frac{1}{4}}^{\frac{3}{4}}t\left(t+\frac{k+1}{2}\right)^{k+1}\left(\frac{k+1}{2}-t\right)^{k-1}dt}
{\int_{-\frac{1}{4}}^{\frac{3}{4}}\left(t+\frac{k+1}{2}\right)^{k+1}\left(\frac{k+1}{2}-t\right)^{k-1}dt}.
\]
Now we define a positive function $w(t)=\left(t+\frac{k+1}{2}\right)^{k+1}\left(\frac{k+1}{2}-t\right)^{k-1}$ on $\left[ -\frac{1}{4}, \frac{3}{4}\right]$. Then
\begin{equation}\label{eq:Q1}
Q_1\left(\frac{k+1}{2}, \frac{1}{4}, \frac{3}{4} \right)=
\frac{\int_{-\frac{1}{4}}^{\frac{3}{4}}\left(t-\frac{1}{4} \right)\left(t+\frac{k+1}{2} \right)w(t)\,dt}
{\int_{-\frac{1}{4}}^{\frac{3}{4}}\left(t+\frac{k+1}{2} \right)w(t)\,dt}.
\end{equation}
Using
\[
\left(t-\frac{1}{4} \right)\left(t+\frac{k+1}{2} \right)=\left(t-\frac{1}{4} \right)
\left\{ \left(t-\frac{1}{4} \right) +\frac{2k+3}{4}\right\},
\]
the numerator in $\eqref{eq:Q1}$ is 
\begin{equation}\label{eq:Q1'}
\int_{-\frac{1}{4}}^{\frac{3}{4}}\left(t-\frac{1}{4} \right)^2w(t)\, dt+\frac{2k+3}{4}\int_{-\frac{1}{4}}^{\frac{3}{4}}\left(t-\frac{1}{4} \right)w(t)\, dt.
\end{equation}
The first term is strictly positive. It therefore remains to show the second term in $\eqref{eq:Q1'}$ is positive.

Setting $s=t-\frac{1}{4}$, we obtain
\begin{align}
\begin{split}\label{eq:Q1''}
\int_{-\frac{1}{4}}^{\frac{3}{4}}\left(t-\frac{1}{4} \right)w(t)\, dt&=
\int_{-\frac{1}{2}}^{\frac{1}{2}}s\left(s+\frac{2k+3}{4} \right)^k\left(\frac{2k+1}{4}-s \right)^{k-1}ds\\
&=\frac{1}{2^{k+1}}\int_{-1}^1x\left(x+\frac{2k+3}{2} \right)^k\left(\frac{2k+1}{2}-x \right)^{k-1}dx,
\end{split}
\end{align}
where we put $x=2s$ in the second equality. Shifting $x\mapsto x+\frac{1}{2}$ if we need, one can see that $\eqref{eq:Q1''}$
corresponds to $I(2k+1,k)$ in \cite[Lemma 2.2]{Del22}. Since $I(2k+1,k)>0$, we conclude that
\[
C\left(\frac{k+1}{2}, \frac{1}{4}, \frac{3}{4} \right)=2Q_1\left(\frac{k+1}{2}, \frac{1}{4}, \frac{3}{4} \right)-\frac{1}{2}>0.
\]
This proves the lemma.
\end{proof}
Since the K\"ahler cone is connected and the functional $C(a_c,a_{+}, a_{-})$ is continuous, 
it remains to find a triple $(a_c,a_{+}, a_{-})$ satisfying $\eqref{ineq:ample}$ for which $C(a_c,a_{+}, a_{-})<0$. 
We now prove the following.
%Lem2
\begin{lemma}\label{lem:negative}
Set $a_c=k-2$, $a_{+}=\frac{1}{2}$ and $a_{-}=0$. Then this triple satisfies $\eqref{ineq:ample}$ for $k\geq 3$.
Moreover, $C(a_c, a_{+}, a_{-})<0$ for all sufficiently large $k$.
\end{lemma}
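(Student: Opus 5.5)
We first check the ampleness conditions directly, then show that $Q_1\le 0$ for every $k$ and that $Q_2\to 0$ as $k\to\infty$. Together these give $C\to$ a value at most $-\tfrac12$, hence $C<0$ for large $k$.

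\emph{Ampleness.} Substitute $(a_c,a_{+},a_{-})=(k-2,\tfrac12,0)$ into $\eqref{ineq:ample}$:
\begin{itemize}
\item $\tfrac12<k-2<k+1$ holds exactly when $k\ge 3$;
\item $0<k-2<k-\tfrac12$ also holds for $k\ge 3$;
\item $0<\tfrac12<2$ holds trivially.
\end{itemize}
The primed triple is then
\[
(a'_c,a'_{+},a'_{-})=\bigl(3,0,\tfrac32\bigr).
\]

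\emph{The term $Q_1$.} By definition, $Q_1(k-2,\tfrac12,0)$ is the barycenter of the interval $[-\tfrac12,0]$ with respect to the weight $(t+k-2)^{k+1}(k-2-t)^{k-1}$. This weight is strictly positive on the interval for $k\ge3$. Since the interval lies in $t\le 0$, we get $-\tfrac12\le Q_1\le 0$, and in fact $Q_1<0$.

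With $Q_1<0$ in hand, the claim reduces to
\[
C\le Q_2(k-2,\tfrac12,0)-\tfrac12 .
\]
So it suffices to show $Q_2<\tfrac12$ for all large $k$. One could go further and show $Q_1\to-\tfrac14$, because the logarithmic derivative of the weight is $O(1/k)$ on $[-\tfrac12,0]$, so the weight becomes asymptotically uniform. This is not needed.

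\emph{The term $Q_2$.} Here the integrals run over $[0,\tfrac32]$ with weight
\[
w_k(t)=(t+3)^{k+1}(3-t)^{k-1}=(t+3)^2(9-t^2)^{k-1}.
\]
The factor $(9-t^2)^{k-1}$ is decreasing on $[0,\tfrac32]$, so the mass concentrates at $t=0$ as $k\to\infty$. This is a Laplace-type argument.

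Fix $\delta=\tfrac14$ and split the numerator of $Q_2$ over $[0,\delta]$ and $[\delta,\tfrac32]$. This gives
\[
Q_2\le \delta+\tfrac32\,\frac{\int_\delta^{3/2}w_k}{\int_0^{3/2}w_k}.
\]
To control the ratio, bound the denominator below by $\int_0^{\delta/2}w_k$, and use $9\le (t+3)^2\le \tfrac{81}{4}$ on the interval. We obtain
\[
\frac{\int_\delta^{3/2}w_k}{\int_0^{3/2}w_k}\le \frac{\tfrac32\cdot\tfrac{81}{4}\,(9-\delta^2)^{k-1}}{\tfrac{\delta}{2}\cdot 9\,(9-\delta^2/4)^{k-1}}
=c\left(\frac{9-\delta^2}{9-\delta^2/4}\right)^{k-1}\longrightarrow 0 .
\]
Hence $Q_2\le \tfrac14+o(1)$, so $Q_2<\tfrac12$ for all sufficiently large $k$.

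\emph{Conclusion.} Combining the two bounds,
\[
C(k-2,\tfrac12,0)<Q_2-\tfrac12<0
\]
for all sufficiently large $k$.

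The only delicate point is this concentration estimate for $Q_2$. It is where "sufficiently large $k$" enters, and it does not yield an explicit threshold beyond what the elementary ratio bound gives.
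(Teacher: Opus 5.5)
Your proof is correct and has the same structure as the paper's: you write $C=Q_1+Q_2-\frac12$, handle $Q_1$ on $[-\frac12,0]$ separately, and show that the weight of $Q_2$ concentrates at $t=0$ through the same splitting bound $Q_2\le\delta+\frac32\int_\delta^{3/2}w_k\big/\int_0^{3/2}w_k$. You streamline it in two ways: you use only the sign $Q_1<0$ rather than the limit $Q_1\to-\frac14$, and you fix $\delta=\frac14$ and bound the denominator on $[0,\delta/2]$, which reduces to comparing two geometric rates, in place of the paper's Taylor expansion, $A/\sqrt{k+1}$ Laplace window and $\delta\to0$; as a bonus, your version yields an explicit, though large, threshold for $k$.
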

\begin{proof}
For $(a_c, a_{+}, a_{-})=\bigl(k-2, \frac{1}{2}, 0 \bigr)$, and $C(a_c, a_{+}, a_{-})=Q_1(a_c, a_{+}, a_{-})+Q_2(a_c, a_{+}, a_{-})-\frac{1}{2}$, one can conclude that
\[
\lim_{k\to \infty} Q_1(a_c, a_{+}, a_{-}) =-\frac{1}{4}
\]
as the same manner in \cite[p.2093]{Del22}. Namely, using
\[
\frac{(t+k-2)^{k+1}(k-2-t)^{k-1}}{(k-1)^{2k}}=\left(1+\frac{t-1}{k-1}\right)^2\left(1+\frac{t-1}{k-1}\right)^{k-1}\left(1-\frac{t+1}{k-1}\right)^{k-1},
\]
one can see that
\[
\lim_{k\to \infty} Q_1(k-2, \frac{1}{2}, 0)=\frac{\int_{-\frac{1}{2}}^0 te^2dt}{\int_{-\frac{1}{2}}^0 e^2dt}=-\frac{1}{4}.
\]
For $(a'_c, a'_{+}, a'_{-})=\left( 3,0, \frac{3}{2}\right)$, we need to prove
\begin{equation}\label{eq:limtQ2}
\lim_{k\to \infty}Q_2(a'_c, a'_{+}, a'_{-})=0.
\end{equation}
To this end, we use a Laplace estimate.
Let $w_k(t):=(t+3)^{k+1}(3-t)^{k-1}$. Then $w_k(t)$ is a positive function on $\left[0, \frac{3}{2}\right]$ and
\[
Q_2(a'_c, a'_{+}, a'_{-})=\frac{\int_0^{\frac{3}{2}}tw_k(t)\,dt}{\int_0^{\frac{3}{2}}w_k(t)\,dt}.
\]
Since 
\[
w_k(t)=(3-t)^{-2}(9-t^2)^{k+1}=(3-t)^{-2}\exp\left((k+1)\log (9-t^2) \right),
\]
we put $\phi(t):=\log (9-t^2)$. The direct computation shows that
\[
\phi'(t)=-\frac{2t}{9-t^2}, \qquad \phi''(t)=\frac{-2(t^2+9)}{(9-t^2)^2}.
\]
In particular, $\phi''(0)=-\frac{2}{9}<0$. Moreover, $\phi'(t)<0$ for $0<t<\frac{3}{2}$, so $\phi(t)$ is strictly decreasing.

Fix any $\delta>0$. For $t\in \left[ \delta, \frac{3}{2}\right]$ we see that
\[
\phi(t)\leq \phi(\delta)< \phi(0)=\phi_{\max}
\]
because $\phi$ is a decreasing function. Also we have
\[
\frac{1}{(3-t)^2}\leq \left( \frac{2}{3}\right)^2=\frac{4}{9}
\]
on $\left[ 0, \frac{3}{2}\right]$. Therefore, $w_k(t)\leq \frac{4}{9}\exp\bigl( (k+1)\phi (\delta)\bigr)$, and hence
\begin{equation}\label{ineq:Estim1}
\int_{\delta}^{\frac{3}{2}}w_k(t)\, dt\leq \frac{4}{9}\cdot \frac{3}{2}e^{(k+1)\phi(\delta)}=\frac{2}{3}e^{(k+1)\phi(\delta)}.
\end{equation}

On the other hand, fix some $A>0$ so that for sufficiently large $k$, we have $\frac{A}{\sqrt{k+1}}<\delta$.
Thus,
\[
\int_{0}^{\delta}w_k(t)\, dt\geq \int_{0}^{\frac{A}{\sqrt{k+1}}}w_k(t)\, dt.
\]
For sufficiently small $t$, the Taylor theorem gives that
\[
\phi(t)=\phi(0)-\frac{1}{9}t^2+o(t^4)
\]
with $\phi(0)=\log 9$. Thus there exists a constant $C_1>0$ such that
\[
\phi(t)\geq \phi(0)-C_1t^2, \qquad 0\leq t \leq \delta.
\]
Hence, for $0\leq t \leq \frac{A}{\sqrt{k+1}}$, we have
\[
(k+1)\phi(t) \geq (k+1)\phi(0)-C_1(k+1)t^2\geq (k+1)\phi(0)-C_1A^2,
\]
which implies that $e^{(k+1)\phi(t)}\geq e^{(k+1)\phi(0)-C_1A^2}$.
Also, on $0\leq t\leq \delta$, we have $\frac{1}{(3-t)^2}\geq \frac{1}{9}$. Thus,
\begin{equation}\label{ineq:wk}
w_k(t)=\frac{e^{(k+1)\phi(t)}}{(3-t)^2}\geq \frac{1}{9}e^{-C_1A^2}e^{(k+1)\phi(0)}.
\end{equation}
Integrating both sides of $\eqref{ineq:wk}$, we conclude that
\[
\int_{0}^{\delta}w_k(t)\, dt\geq \int_{0}^{\frac{A}{\sqrt{k+1}}}w_k(t)\, dt\geq \frac{1}{9}e^{-C_1A^2}e^{(k+1)\phi(0)}\cdot 
\frac{A}{\sqrt{k+1}}.
\]
Thus, setting $C:=\frac{A}{9}e^{-C_1A^2}$, we obtain
\begin{equation}\label{ineq:Estim2}
\int_{0}^{\delta}w_k(t)\, dt \geq C\cdot \frac{e^{(k+1)\phi(0)}}{\sqrt{k+1}}.
\end{equation}
Combining $\eqref{ineq:Estim1}$ and $\eqref{ineq:Estim2}$, we estimate
\[
\frac{\int_{\delta}^{\frac{3}{2}}w_k(t)\, dt}{\int_{0}^{\delta}w_k(t)\, dt}\leq \frac{\frac{2}{3}e^{(k+1)\phi(\delta)}}{C\cdot \frac{e^{(k+1)\phi(0)}}{\sqrt{k+1}}}
=\frac{2\sqrt{k+1}}{3C}e^{-(k+1)\bigl( \phi(0)-\phi(\delta)\bigr)}.
\]
Since $\phi(0)-\phi(\delta)>0$, the exponential decay dominates the factor $\sqrt{k+1}$, and therefore
\[
\lim_{k\to \infty} \frac{\int_{\delta}^{\frac{3}{2}}w_k(t)\, dt}{\int_{0}^{\delta}w_k(t)\, dt}=0.
\]
Consequently,
\[
0\leq \frac{\int_{\delta}^{\frac{3}{2}}w_k(t)\, dt}{\int_{0}^{\frac{3}{2}}w_k(t)\, dt}\leq 
\frac{\int_{\delta}^{\frac{3}{2}}w_k(t)\, dt}{\int_{0}^{\delta}w_k(t)\, dt},
\]
so we proved
\begin{equation}\label{eq:limit_Int}
\lim_{k\to \infty} \frac{\int_{\delta}^{\frac{3}{2}}w_k(t)\, dt}{\int_{0}^{\frac{3}{2}}w_k(t)\, dt}=0.
\end{equation}
Now we estimate the weighted average. Let $R_k:=Q_2(a'_c, a'_{+}, a'_{-})$.
Since
\begin{align*}
\int_0^\delta tw_k(t)\, dt &\leq %\int_0^\delta \delta w_k(t)\, dt=
\delta \int_0^\delta w_k(t)\, dt, \qquad \text{and} \qquad
\int_\delta^{\frac{3}{2}} tw_k(t)\, dt \leq \frac{3}{2} \int_\delta^{\frac{3}{2}} w_k(t)\, dt,
\end{align*}
we obtain
\begin{align*}
R_k&\leq \delta +\frac{3}{2}\cdot \frac{\int_\delta^{\frac{3}{2}} w_k(t)\, dt}{\int_{0}^{\frac{3}{2}}w_k(t)\, dt}
\end{align*}
By $\eqref{eq:limit_Int}$, $\lim_{k\to \infty}\sup R_k\leq \delta$. Since $\delta>0$ is arbitrary and $R_k\geq 0$, we conclude that
\[
\lim_{k\to \infty}R_k=0. 
\]
This proves $\eqref{eq:limtQ2}$.

Consequently, we proved that
\[
\lim_{k\to \infty}Q_1(a_c, a_{+}, a_{-})=-\frac{1}{4}, \qquad \lim_{k\to \infty}Q_2(a'_c, a'_{+}, a'_{-})=0
\]
for $C(a_c, a_{+}, a_{-})=Q_1(a_c, a_{+}, a_{-})+Q_2(a'_c, a'_{+}, a'_{-})$. Thus, $C(a_c, a_{+}, a_{-})<0$ for all sufficiently large $k$.
This proves the lemma
\end{proof}
The existence of a point at which $C(a_c, a_{+}, a_{-})>0$ (Lemma \ref{lem:positive}) and a point at which $C(a_c, a_{+}, a_{-})<0$ (Lemma \ref{lem:negative}), together with the continuity of $C(a_c, a_{+}, a_{-})$ and the connectedness of the K\"ahler cone, imply by the intermediate value theorem that there exists a triple satisfying $\eqref{ineq:ample}$ for which $C(a_c, a_{+}, a_{-})=0$.
This completes the proof of Theorem \ref{thm:main}.

%Sec3.3
\subsection{Toric geometric viewpoint}\label{sec:toric}
In this subsection, we give a toric-geometric proof of Theorem \ref{thm:main}. 
As in Section \ref{sec:horospherical}, we mainly consider the family 
\[
Y_k=\P_{\P^{k+1}\times \P^{k-1}}(\O\oplus \O(-1,1)), \qquad k\geq 2.
\]
In the toric seeting, Hultgren's criterion provides a coupled analogue of Wang--Zhu's criterion for ordinary K\"ahler--Einstein metrics. 
More precisely, Hultgren proved that the existence of coupled K\"ahler--Einstein metrics is equivalent to a barycentre condition for a Minkowski decomposition of the anticanonical polytope. 
Let $P$ be an $n$-dimensional reflexive Delzant polytope, and let $X$ be the associated smooth toric Fano $n$-fold.
Suppose that a set of polytopes $P_1,\dots, \P_\ell \subset \R^n$ are polytopes satisfying the Minkowski decomposition
\[
P=P_1+\dots +P_\ell.
\]
For each $i$, let $\alpha_i$ denote the K\"ahler class corresponding to $P_i$. Then the following statements are equivalent 
(see \cite[Theorem 2]{H19}):
\begin{enumerate}
\item The decomposition $(\alpha_1, \dots \alpha_\ell)$ of $c_1(X)$ admits a coupled K\"ahler--Einstein tuple.
\item $\displaystyle
\frac{\int_{P_1}\bs x\, dV}{\Vol(P_1)}+\cdots +\frac{\int_{P_\ell}\bs x\, dV}{\Vol(P_\ell)}=0$.
\end{enumerate}

Set $p=k+1$, $q=k-1$ and $n:=p+q+1=2k+1$. Let $\Sigma_k$ be the corresponding complete fan of $Y_k$ in $N_\R$.
Recall from Section \ref{sec:arbitrary} that the primitive generators of the rays of $\Sigma_k$ are
\begin{align*}
\bs v_i&=\bs e_{i} \quad (1\leq i \leq p), \qquad \bs v_j=\bs e_{j} \quad (p+1\leq j \leq p+q),  \\
\bs v_+&=-\sum_{i=1}^p\bs e_i+\bs e_n, \qquad  \bs v_-=-\sum_{j=p+1}^{p+q}\bs e_j-\bs e_n, \qquad \bs v_0=-\bs e_n, \quad \bs v'_0=\bs e_n.
\end{align*}
The primitive relations of $\Sigma_k$ are
\[
\sum_{i=1}^p\bs v_i+\bs v_+=\bs v'_0, \qquad
\sum_{j=p+1}^{p+q}\bs v_j+\bs v_-=\bs v_0, \qquad
\bs v'_0+\bs v_0=0. 
\]
After applying a linear change of primitive generators, we have
\begin{align*}
\bs v_i=\bs e_i& \longmapsto \bs e_i-k\bs e_n=:\bs w_i \qquad \qquad (1\leq i \leq p), \qquad 
&\bs v'_0=\bs e_n \longmapsto -k(k+2)\bs e_n=: \bs w'_0, \\
\bs v_j=\bs e_j &\longmapsto \bs e_j+(k+2)\bs e_n=:\bs w_j \quad (p+1 \leq j \leq p+q),   \qquad
&~~~~~ \bs v_0=-\bs e_n \longmapsto k(k+2)\bs e_n=: \bs w_0.
\end{align*}
Then we observe that
\[
\bs v_+ \longmapsto \bs w_+:=-\sum_{i=1}^{k+1}\bs e_i -k\bs e_n, \qquad \qquad
\bs v_- \longmapsto \bs w_-:=-\sum_{i=k+2}^{2k}\bs e_j+(k+2)\bs e_n.
\]
For $c\in \left( \frac{1}{4}, \frac{3}{4}\right)$, define
\begin{align*}
P_c&:=\left \{ \bs x\in \R^n \, \bigg | \, \braket{\bs x, \bs w_i}\geq -\frac{1}{2} \,\,\,\,\, (1\leq i \leq k+1), \quad
\braket{\bs x, \bs w_j}\geq -c \,\,\,\,\, (k+2\leq j \leq 2k), \right . \\
& \left.  \hspace{2.3cm} \braket{\bs x, \bs w_{\pm}}\geq -\frac{1}{2}, \quad \braket{\bs x, \bs w_{0}}\geq -\frac{1}{2}, \quad 
\braket{\bs x, \bs w'_{0}}\geq -\frac{1}{2}  \right \}.
\end{align*}
Then $P:=P_c+P_{1-c}$ is the anticanonical polytope of $Y_k$.

Set $u:=k(k+2)x_n$. The generators $\bs w_0$, $\bs w'_0$ (corresponding to the fiber direction) give 
\[
-\frac{1}{2}\leq u \leq \frac{1}{2}.
\]
Fix one value of $u$. Then $x_n$ is fixed, so the remaining variables are $x_1, \dots, x_{k+1}$ and
$x_{k+2}, \dots, x_{2k}$. Let 
\[
P_c(u)=P_c\cap\set{u=\text{constant}} 
\]
denote the slice polytope. Consider the first block of $P_c(u)$ corresponding to the $\P^{k+1}$-factor of the base:
\begin{equation}\label{ineq:1st}
x_i\geq -\frac{1}{2}+\frac{u}{k+2}=-\frac{1}{2}+\alpha(u) \quad (1\leq i \leq k+1),
\qquad \sum_{i=1}^{k+1}x_i \leq \frac{1}{2}-\frac{u}{k+2}=\frac{1}{2}-\alpha(u),
\end{equation}
where $\alpha(u):=\frac{u}{k+2}$. Since $u$ is fixed, $\alpha(u)$ is constant.
Translating the variables $y_i:=x_i+\frac{1}{2}-\alpha(u)$, we find that
\begin{align*}
\eqref{ineq:1st} \quad \Leftrightarrow \quad y_i\geq 0 \quad (1\leq i \leq k+1), \qquad \sum_{i=1}^{k+1}y_i
\leq (k+2)\left( \frac{1}{2}-\alpha(u)\right)=:L_1(u).
\end{align*}
Therefore, the first factor of the slice polytope is the standard simplex
\[
\Set{(y_1, \dots, y_{k+1})\in \R^{k+1}| y_i\geq 0 \, \,\, (i=1, \dots, k+1), \quad  \sum_{i=1}^{k+1}y_i \leq L_1(u)},
\]
whose side length is 
\begin{equation*}\label{def:A(u)}
L_1(u)=(k+2)\left( \frac{1}{2}-\alpha(u)\right)=\frac{k+2}{2}-u=:A(u).
\end{equation*}
Similarly for the second block of $P_c(u)$,
\[
x_j\geq -c+\gamma(u), \qquad \sum_{j=k+2}^{2k}x_j\leq c-\gamma(u), \qquad \gamma(u):=-\frac{u}{k}.
\]
Translating the variables $z_j:=x_j+c-\gamma(u)$, we see that the slice is
\[
\Set{(z_{k+2}, \dots, z_{2k})\in \R^{k-1}| z_j\geq 0 \, \,\, (j=k+2, \dots, 2k), \quad  \sum_{j=k+2}^{2k}z_j \leq L_2(u)},
\]
whose side length is
\begin{equation*}\label{def:B(u)}
L_2(u)=k(c-\gamma(u))=u+kc=:B_c(u).
\end{equation*}
If we denote
\[
\D^m(L):=\Set{(y_1,\dots, y_m)\in \R^m | y_i\geq 0, \quad \sum_{i=1}^m y_i \leq L},
\]
then $\Vol(\D^m(L))=\frac{L^m}{m!}$. Therefore $\displaystyle P_c(u)=\D^{k+1}\bigl(A(u)\bigr)\times \D^{k-1}\bigl(B_c(u)\bigr)$,
and hence
\[ 
\Vol(P_c(u))=\frac{A(u)^{k+1}}{(k+1)!}\cdot \frac{B_c(u)^{k-1}}{(k-1)!}.
\]
By symmetry, all barycenter coordinates except the $x_n$-coordinate vanish.
Thus, the relevant barycenter coordinate is proportional to
\[
M_k(c)=\frac{\displaystyle \int_{-\frac{1}{2}}^{\frac{1}{2}}uA(u)^{k+1}B_c(u)^{k-1}du}
{\displaystyle \int_{-\frac{1}{2}}^{\frac{1}{2}}A(u)^{k+1}B_c(u)^{k-1}du}.
\]
By Hultgren's toric criterion for coupled KE metrics \cite[Theorem 2]{H19}, the pair $(P_c, P_{1-c})$ determines a two-coupled KE metric
if and only if there exists a constant $c\in\left(\frac{1}{4}, \frac{3}{4}\right)$ satisfying 
\[
M_k(c)+M_k(1-c)=0.
\]
Now define $F_k(c):=M_k(c)+M_k(1-c)$ which is a continuous function on $\left(\frac{1}{4}, \frac{3}{4}\right)$.
%Lem
\begin{lemma}\label{lem:Mk_negative}
We have $F_k\left(\frac{1}{2}\right)<0$ for every $k\geq 2$. In particular, $Y_k$ does not admit an ordinary K\"ahler-Einstein metric.
\end{lemma}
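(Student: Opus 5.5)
The plan is to reduce $F_k\left(\frac12\right)<0$ to a sign statement about a single integral and settle it by comparing the weight at $u$ and $-u$. Since $c=1-c=\frac12$, we have $F_k\left(\frac12\right)=2M_k\left(\frac12\right)$. Set $a:=\frac{k+2}{2}$ and $b:=\frac{k}{2}$, so that
\[
A(u)=a-u, \qquad B_{1/2}(u)=b+u .
\]
Define $w(u):=(a-u)^{k+1}(b+u)^{k-1}$. On $\left[-\frac12,\frac12\right]$ both factors are positive, because $a-u\geq \frac{k+1}{2}$ and $b+u\geq\frac{k-1}{2}>0$ for $k\geq 2$. Hence the denominator of $M_k\left(\frac12\right)$ is positive, and it suffices to show
\[
\int_{-1/2}^{1/2}u\,w(u)\,du<0 .
\]
Folding the interval at $0$, this integral equals $\int_0^{1/2}u\bigl(w(u)-w(-u)\bigr)\,du$. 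So the whole claim reduces to the pointwise inequality $w(u)<w(-u)$ for $0<u\leq\frac12$.

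To prove the pointwise inequality, I will study
\[
f(u):=\log\frac{w(u)}{w(-u)}=(k+1)\bigl[\log(a-u)-\log(a+u)\bigr]+(k-1)\bigl[\log(b+u)-\log(b-u)\bigr].
\]
Note that $f(0)=0$. Differentiating gives
\[
f'(u)=-\frac{2(k+1)a}{a^2-u^2}+\frac{2(k-1)b}{b^2-u^2}.
\]
Clearing the positive denominators, $f'(u)<0$ is equivalent to
\[
(k+1)a(b^2-u^2)-(k-1)b(a^2-u^2)>0 .
\]
The constant term is $ab\bigl[(k+1)b-(k-1)a\bigr]=ab$, since $(k+1)\frac{k}{2}-(k-1)\frac{k+2}{2}=1$. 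The $u^2$ coefficient is $-(k+1)a+(k-1)b=-(2k+1)$. So the condition becomes
\[
\frac{k(k+2)}{4}-(2k+1)u^2>0 .
\]
For $u^2\leq\frac14$ this follows from $k(k+2)>2k+1$, i.e. $k^2>1$, which holds for $k\geq2$. Thus $f$ is strictly decreasing on $\left[0,\frac12\right]$, so $f<0$ on $\left(0,\frac12\right]$, which gives $w(u)<w(-u)$ there. Consequently $M_k\left(\frac12\right)<0$, and hence $F_k\left(\frac12\right)<0$.

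For the final assertion, note that $P_{1/2}+P_{1/2}=P$ with $P_{1/2}=\frac12P$. The coupled barycenter condition at $c=\frac12$ is therefore exactly the vanishing of the barycenter of the anticanonical polytope $P$. Its $x_n$-component is a positive multiple of $M_k\left(\frac12\right)\neq0$. By Mabuchi's computation of the Futaki invariant and the Wang--Zhu criterion (equivalently, \cite[Theorem 2]{H19} with $\ell=1$), $Y_k$ admits no Kähler--Einstein metric.

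The only delicate step is the sign of $f'$, and specifically the exact cancellation that leaves the constant term equal to $ab$. The rest is bookkeeping, including checking that the slice description and the change of variables preserve the sign of the $x_n$-barycenter, with $u=k(k+2)x_n$ and a positive factor.
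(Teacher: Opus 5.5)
Your proof is correct, but it takes a different route from the paper's.

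The paper notices that at $c=\frac12$ the integrand is an exact derivative, $uA(u)^{k+1}B_{1/2}(u)^{k-1}=-\frac{1}{2(k+1)}\frac{d}{du}\bigl(A(u)^{k+2}B_{1/2}(u)^{k}\bigr)$. So the numerator of $M_k(\frac12)$ is evaluated in closed form, and its sign reduces to the integer inequality $(k+1)^{2k+2}>(k+3)^{k+2}(k-1)^k$. The paper proves this by checking $k=2$ by hand and bounding $\log R_k$ with truncated Taylor inequalities for $\log(1\pm x)$.

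You instead fold the interval at $0$ and prove the stronger pointwise statement $w(u)<w(-u)$ on $(0,\frac12]$. You get this from the monotonicity of $\log\bigl(w(u)/w(-u)\bigr)$, which collapses to the quadratic condition $k(k+2)>4(2k+1)u^2$, i.e.\ $k^2>1$. Your computation of the constant term $ab$ and of the $u^2$-coefficient $-(2k+1)$ checks out.

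The paper's route yields an exact value for the numerator. However, it relies on an identity special to $c=\frac12$ and on an estimate with a thin margin. The Taylor bound comes out as $\frac{8(2-k)}{3(k+1)^4}$ (the paper's display drops the exponent $4$), which vanishes at $k=2$ and forces the separate check.

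Your argument needs neither an antiderivative nor any numerical estimate, and it is uniform in $k\geq 2$ with no case split. It also gives the qualitative reason for the sign: the slice-volume density is tilted toward $u<0$.
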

\begin{proof}
The second assertion follows immediately from the first, since $P=P_{\frac{1}{2}}+P_{\frac{1}{2}}=2P_{\frac{1}{2}}$.
In particular, $F_k\left(\frac{1}{2}\right)=2M_k\left(\frac{1}{2}\right)$. Thus, it suffices to prove $M_k\left(\frac{1}{2}\right)<0$ for all $k\geq 2$.

For $c=\frac{1}{2}$, 
\[
\frac{d}{du}\left( A(u)^{k+2}B_{\frac{1}{2}}(u)^k\right)=-2(k+1)uA(u)^{k+1}B_{\frac{1}{2}}(u)^{k-1}.
\]
Therefore,
\begin{align*}
\int_{-\frac{1}{2}}^{\frac{1}{2}}uA(u)^{k+1}B_{\frac{1}{2}}(u)^{k-1}du&=-\frac{1}{2(k+1)}\left[A(u)^{k+2}B_{\frac{1}{2}}(u)^{k} \right]^{u=\frac{1}{2}}_{u=-\frac{1}{2}}\\
&=-\frac{1}{2(k+1)}\left\{A\left(\frac{1}{2}\right)^{k+2}B_{\frac{1}{2}}\left(\frac{1}{2}\right)^{k}-
A\left(-\frac{1}{2}\right)^{k+2}B_{\frac{1}{2}}\left(-\frac{1}{2}\right)^{k}
\right\}.
\end{align*}
Since $A\left(\frac{1}{2}\right)=B_{\frac{1}{2}}\left(\frac{1}{2}\right)=\frac{k+1}{2}$, $A\left(-\frac{1}{2}\right)=\frac{k+3}{2}$
and $B_{\frac{1}{2}}\left(-\frac{1}{2}\right)=\frac{k-1}{2}$, it is enough to show that
\begin{equation}\label{ineq:k}
(k+1)^{2k+2}-(k+3)^{k+2}(k-1)^k>0
\end{equation}  
for proving the lemma. To prove this inequality, define
\[
R_k:=\frac{(k+3)^{k+2}(k-1)^k}{(k+1)^{2k+2}}, \qquad k\geq 2.
\]
We claim that $R_k<1$ for every $k\geq 2$.

Indeed, for $k=2$,
\[
R_2=\frac{5^4\cdot 1}{3^6}=\frac{625}{729}<1.
\]
For $k\geq 3$, we estimate the logarithm of $R_k$.
Since
\[
R_k=\left(\frac{k+3}{k+1}\right)^2\left(1+\frac{2}{k+1}\right)^k\left(1-\frac{2}{k+1}\right)^k=\left(\frac{k+3}{k+1}\right)^2\left(1-\frac{4}{(k+1)^2}\right)^k,
\]
we have
\[
\log R_k=2\log\left(1+\frac{2}{k+1}\right)+k\log\left(1-\frac{4}{(k+1)^2}\right).
\]
Using 
\[
\log(1+x)<x-\frac{x^2}{2}+\frac{x^3}{3}, \qquad \log(1-x)<-x-\frac{x^2}{2}, \qquad (x>0)
\]
we obtain
\begin{align*}
\log R_k&< 2\left( \frac{2}{k+1}-\frac{2}{(k+1)^2}+\frac{8}{3(k+1)^3} \right)-k\left( \frac{4}{(k+1)^2}+\frac{8}{(k+1)^4} \right)\\
&=\frac{8(2-k)}{3(k+1)}<0 \qquad (k\geq 3).
\end{align*}
Therefore, $\log R_k<0$ for all $k\geq 2$. This proves that the inequality $\eqref{ineq:k}$ holds for every $k\geq 2$. 
Since $F_k\left(\frac{1}{2}\right)=2M_k\left(\frac{1}{2}\right)<0$, this proves the lemma.
\end{proof}
As the K\"ahler range is $\left( \frac{1}{4}, \frac{3}{4}\right)$, it suffices to show the following lemma to prove Theorem \ref{thm:main}.
%Lem
\begin{lemma}\label{lem:Fk_positive}
For all sufficiently large $k$, we have $F_k\left(\frac{1}{4}\right)>0$.
\end{lemma}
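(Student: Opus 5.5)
The plan is to mirror the limiting argument of Lemma \ref{lem:negative}: compute the limit of $M_k(c)$ as $k\to\infty$ for fixed $c$, and then check that the limiting value of $F_k\left(\frac{1}{4}\right)$ is strictly positive. Since the limit is computed for fixed $c$, I will work at $c=\frac{1}{4}$ directly, as in the statement. The weight in $M_k(c)$ is $A(u)^{k+1}B_c(u)^{k-1}$. I will normalize it by its value at $u=0$:
\[
\frac{A(u)^{k+1}B_c(u)^{k-1}}{A(0)^{k+1}B_c(0)^{k-1}}
=\left(1-\frac{2u}{k+2}\right)^{k+1}\left(1+\frac{u}{kc}\right)^{k-1}.
\]
For fixed $c\in\left[\frac{1}{4},\frac{3}{4}\right]$ this converges to $e^{-2u}e^{u/c}=e^{a_c u}$, where $a_c:=\frac{1}{c}-2$. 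The convergence is uniform on $u\in\left[-\frac{1}{2},\frac{1}{2}\right]$, because the logarithms converge uniformly there: the arguments stay in a compact subset of $(0,\infty)$ for $k$ large, and $\log(1+x)=x+O(x^2)$. The normalization cancels in the quotient defining $M_k(c)$, and both integrands are uniformly bounded. Hence
\[
\lim_{k\to\infty}M_k(c)=g(a_c),\qquad g(a):=\frac{\int_{-1/2}^{1/2}u\,e^{au}\,du}{\int_{-1/2}^{1/2}e^{au}\,du}=\frac{1}{2}\coth\frac{a}{2}-\frac{1}{a}\quad(a\neq 0).
\]

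The relevant exponents are $a_{1/4}=4-2=2$ for $c=\frac{1}{4}$ and $a_{3/4}=\frac{4}{3}-2=-\frac{2}{3}$ for $1-c=\frac{3}{4}$. So $\lim_{k\to\infty}F_k\left(\frac{1}{4}\right)=g(2)+g\left(-\frac{2}{3}\right)$. The function $g$ is odd and increasing, since $g'(a)$ is the variance of the tilted distribution. Therefore $g(2)+g\left(-\frac{2}{3}\right)=g(2)-g\left(\frac{2}{3}\right)>0$. Numerically, $g(2)=\frac{1}{2}(\coth 1-1)\approx 0.157$ and $g\left(\frac{2}{3}\right)=\frac{3}{2}-\frac{1}{2}\coth\frac{1}{3}\approx 0.055$. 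The limit is a strictly positive number, so $F_k\left(\frac{1}{4}\right)>0$ for all sufficiently large $k$.

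The main obstacle is making the uniform convergence rigorous, in particular at $c=\frac{1}{4}$, which is an endpoint of the Kähler range. There, at $u=-\frac{1}{2}$, $B_{1/4}(u)=\frac{k}{4}-\frac{1}{2}$ is still of order $k$, so it causes no degeneration for large $k$. I would write out the expansion
\[
\log\left(1+\frac{u}{kc}\right)^{k-1}=\frac{u}{c}+O\!\left(\frac{1}{k}\right),
\]
and the analogous expansion for the $A$-factor, with the error bounds uniform in $|u|\le\frac{1}{2}$. The limit then passes inside both integrals by uniform convergence on a compact interval, and the remaining positivity is the monotonicity of $g$.
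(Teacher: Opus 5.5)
Your proposal is correct and is essentially the paper's argument: you normalize the two weights so that they converge uniformly on $[-\tfrac12,\tfrac12]$ to multiples of $e^{2u}$ and $e^{-2u/3}$, pass to the limit, and reduce positivity to $m(2)+m(-\tfrac23)>0$ with $m(a)=\tfrac12\coth\tfrac a2-\tfrac1a$. The differences are cosmetic. You take limits of the two barycenters separately rather than of the paper's double integral, and you get the sign from oddness and monotonicity of $g$ rather than from the identity $m(2)+m(-\tfrac23)=1-\cosh(\tfrac13)/\sinh 1$. There is also a small slip: $g(\tfrac23)=\tfrac12\coth\tfrac13-\tfrac32\approx 0.055$, not its negative.
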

\begin{proof}
For convenience, set $w_c(u):=A(u)^{k+1}B_c(u)^{k-1}$. Then 
\[
M_k(c)=\frac{\int_{-\frac{1}{2}}^{\frac{1}{2}}uw_c(u)\, du}{\int_{-\frac{1}{2}}^{\frac{1}{2}}w_c(u)\, du}.
\]
Setting $D_0:=\int_{-\frac{1}{2}}^{\frac{1}{2}}w_{c_0}(u)\, du$ and $D_1:=\int_{-\frac{1}{2}}^{\frac{1}{2}}w_{1-c_0}(u)\, du$, we have
\[
F_k(c_0)=\frac{1}{D_0D_1}\int_{-\frac{1}{2}}^{\frac{1}{2}}\int_{-\frac{1}{2}}^{\frac{1}{2}}(t+s)w_{c_0}(t)w_{1-c_0}(s)\,dtds=:I_k.
\]
Thus it is enough to show $I_k>0$ for all sufficiently large $k$ and $c_0=\frac{1}{4}$ to prove the lemma.
Substituting $c_0=\frac{1}{4}$, we have
\[
w_{c_0}(t)=\left( \frac{k+2}{2}-t\right)^{k+1}\left( \frac{k}{4}+t\right)^{k-1}, \qquad
w_{1-c_0}(s)=\left( \frac{k+2}{2}-s\right)^{k+1}\left( \frac{3k}{4}+s\right)^{k-1}.
\] 
Fix $k\in \Z_{>0}$, and let
\[
C_{1,k}:=\left( \frac{k}{2}\right)^{k+1}\left( \frac{k}{4}\right)^{k-1}, \qquad
C_{2,k}:=\left( \frac{k}{2}\right)^{k+1}\left( \frac{3k}{4}\right)^{k-1}.
\]
Factoring out the terms independent of $t$ and $s$, we obtain
\[
\frac{w_{c_0}(t)}{C_{1,k}}=\left( 1+\frac{2(1-t)}{k}\right)^{k+1}\left( 1+\frac{4t}{k}\right)^{k-1}, \qquad
\frac{w_{1-c_0}(s)}{C_{2,k}}=\left( 1+\frac{2(1-s)}{k}\right)^{k+1}\left( 1+\frac{4s}{3k}\right)^{k-1}.
\]
Using $\log \left(1+\frac{x}{k}\right)=\frac{x}{k}+O(k^{-2})$, %uniformly on $\left[ -\frac{1}{2}, \frac{1}{2}\right]$, 
we obtain
\begin{align*}
\log \left(\frac{w_{c_0}(t)}{C_{1,k}}\right)&=(k+1)\log\left( 1+\frac{2(1-t)}{k}\right)+(k-1)\log \left( 1+\frac{4t}{k}\right)\\
&=(k+1)\left( \frac{2(1-t)}{k}+O(k^{-2})\right)+(k-1) \left( \frac{4t}{k}+O(k^{-2}) \right)=2(1+t)+O(k^{-1}),
\end{align*}
uniformly in $t$.
Hence, we conclude
\[
\frac{w_{c_0}(t)}{C_{1,k}}=e^{2(1+t)}(1+O(k^{-1})), \qquad \frac{w_{1-c_0}(s)}{C_{2,k}}=e^{2-\frac{2}{3}s}(1+O(k^{-1}))
\]
as well. Consequently,
\begin{align}
\begin{split}\label{lim:Ik}
\lim_{k\to \infty}\frac{I_k}{C_{1,k}C_{2,k}}&=e^4\int_{-\frac{1}{2}}^{\frac{1}{2}}\int_{-\frac{1}{2}}^{\frac{1}{2}}(t+s)e^{2t-\frac{2}{3}s}\, dtds \\
&=e^4\left\{ 
\left( \int_{-\frac{1}{2}}^{\frac{1}{2}}te^{2t}\,dt \right)\left( \int_{-\frac{1}{2}}^{\frac{1}{2}}e^{-\frac{2}{3}s}\,ds \right)
+\left( \int_{-\frac{1}{2}}^{\frac{1}{2}}e^{2t}\,dt \right)\left( \int_{-\frac{1}{2}}^{\frac{1}{2}}se^{-\frac{2}{3}s}\,ds \right)
\right\}.
\end{split}
\end{align}
To evaluate the sign of $\eqref{lim:Ik}$, we define
\[
Z(a)=\int_{-\frac{1}{2}}^{\frac{1}{2}} e^{ax}\,dx.
\]
Define the probability density $\displaystyle p_a(x)=\frac{e^{ax}}{Z(a)}$,~~ $\displaystyle \left(-\frac{1}{2}\leq x\leq \frac{1}{2}\right)$ which is normalized since
\[
 \int_{-\frac{1}{2}}^{\frac{1}{2}}p_a(x)\,dx=\frac{1}{Z(a)}\int_{-\frac{1}{2}}^{\frac{1}{2}} e^{ax}\,dx=1.
\]
The expectation of $x$ with respect to this density is
\[
m(a):= \int_{-\frac{1}{2}}^{\frac{1}{2}}xp_a(x)\,dx=\frac{1}{Z(a)}\int_{-\frac{1}{2}}^{\frac{1}{2}} xe^{ax}\,dx=\frac{Z'(a)}{Z(a)}.
\]
Indeed,
\[
Z'(a)=\frac{d}{da} \int_{-\frac{1}{2}}^{\frac{1}{2}} e^{ax}\, dx=\int_{-\frac{1}{2}}^{\frac{1}{2}} xe^{ax}\, dx.
\]
Then the double integral in $\eqref{lim:Ik}$ is equal to
\begin{align*}
Z(2)Z\left( -\frac{2}{3}\right)\left[\frac{\int_{-\frac{1}{2}}^{\frac{1}{2}} te^{2t}\, dt}{Z(2)}
+\frac{\int_{-\frac{1}{2}}^{\frac{1}{2}} se^{-\frac{2}{3}s}\, ds}{Z\left(-\frac{2}{3} \right)}
\right]&=
Z(2)Z\left( -\frac{2}{3}\right)\left(m(2)+m\left(-\frac{2}{3} \right) \right).
\end{align*}
Since $Z(2)$, $Z\left( -\frac{2}{3}\right)$ are positive values, the sign of $\eqref{lim:Ik}$ is exactly the sign $m(2)+m\left(-\frac{2}{3} \right)$.

For $a\neq 0$,
\[
Z(a)=\int_{-\frac{1}{2}}^{\frac{1}{2}} e^{ax}\,dx=\frac{e^{\frac{a}{2}}-e^{-\frac{a}{2}}}{a}=\frac{2\sinh\left(\frac{a}{2}\right)}{a}.
\]
Hence, $\log Z(a)=\log 2+\log \sinh\left(\frac{a}{2}\right)-\log a$. 
Since $m(a)=\frac{Z'(a)}{Z(a)}=\frac{d}{da}\log Z(a)$, we obtain
\begin{align*}
m(a)&=\frac{\cosh\left(\frac{a}{2}\right)}{2\sinh\left(\frac{a}{2}\right)}-\frac{1}{a}
=\frac{1}{2}\coth \left(\frac{a}{2}\right)-\frac{1}{a}.
\end{align*}
Using this, we have
\begin{align}
m(2)+m\left( -\frac{2}{3}\right)&=1+\frac{1}{2}\left( \coth 1-\coth\left( \frac{1}{3}\right) \right) \notag\\
&=1+\frac{1}{2}\left( \frac{-\sinh \left( \frac{2}{3}\right)}{\sinh \left( \frac{1}{3}\right)\sinh 1 } \right)
=1-\frac{\cosh \left(\frac{1}{3} \right)}{\sinh 1}, \label{eq:MV1}
\end{align}
where we used $\coth x-\coth y=\dfrac{\sinh(y-x)}{\sinh x \sinh y}$ in the above equality. 
Since $\displaystyle \sinh 1>\cosh  \left( \frac{1}{3}\right)$, we conclude from $\eqref{eq:MV1}$ that
\[
m(2)+m\left( -\frac{2}{3}\right)>0.
\]
%Moreover,
%\[
%\sinh 1=2\sinh \left( \frac{1}{2}\right) \cosh  \left( \frac{1}{2}\right) > \cosh  \left( \frac{1}{2}\right) >\cosh  \left( \frac{1}{3}\right).
%\]
%Thus, $m(2)+m\left( -\frac{2}{3}\right)>0$ by $\eqref{eq:MV1}$.
Consequently, $I_k>0$ for all sufficiently large $k$.

Since $D_0D_1>0$, it follows that 
\[
F_k\left( \frac{1}{4}\right)>0
\]
for all sufficiently large $k$. This completes the proof of lemma.
\end{proof}
By Lemmas \ref{lem:Mk_negative}, \ref{lem:Fk_positive} together with the intermediate value theorem, there exists $c_k\in \left(\frac{1}{4}, \frac{3}{4} \right)$ such that $F_k(c_k)=0$. Since $P_{c_k}$ and $P_{1-c_k}$ determine the K\"ahler classes
\[
\alpha_1, \alpha_2\in H^{1,1}(Y_k,\R), \qquad \alpha_1+ \alpha_2=c_1(Y_k),
\]
the pair $(\alpha_1,\alpha_2)$ admits a two-coupled KE metric. This completes the proof of Theorem \ref{thm:main}.

%Sec4
\section{Nonexistence of coupled K\"ahler-Einstein metrics on toric Fano threefold $\mathcal F_2$.}\label{sec:ToriFano3}
In this section we study the toric Fano threefold $\mathcal F_2$ the unique toric Fano threefold that does not admit a K\"ahler-Einstein metric but is not excluded a priori from admitting a coupled K\"ahler-Einstein (cKE) metric (cf. Proposition \ref{prop:Aut}). 
More precisely, we investigate whether there exists an $\ell$-tuple decomposition
\[
c_1(X)=\alpha_1+\dots +\alpha_\ell
\]
whose associated K\"ahler classes admit an $\ell$-coupled KE metric.

Our main result in this section shows that this possibility does not occur. More precisely, for every $\ell\in \Z_{>0}$, we prove that there exists no decomposition $(\alpha_1, \dots, \alpha_\ell)$ of $c_1(\mathcal F_2)$ admitting a cKE metric , when considering one-parameter family of ample divisors.
%Prop
\begin{proposition}\label{prop:F2}
Let $\pi:\mathcal F_2 \to \P^1$ denote the projection onto $\P^1$.
Let $\beta_1, \beta_2\in H^{1,1}(\mathcal F_2)$ be the K\"ahler classes corresponding to the torus-invariant divisors 
\[
D_1:=\pi^{-1}(0), \quad D_2:=\pi^{-1}(\infty),
\]
respectively. Consider a decomposition of $c_1(\mathcal F_2)$ of the form $(\alpha_1, \alpha_2)$, where 
\[
\alpha_1=\frac{1}{2}\left(c_1(\mathcal F_2)-\beta_1-\beta_2 \right)+C(\beta_1+\beta_2), \qquad 
\alpha_2=\frac{1}{2}\left(c_1(\mathcal F_2)-\beta_1-\beta_2 \right)+(1-C)(\beta_1+\beta_2).
\]
Here the constants $C$ and  $1-C$ lie in the interval $\left( \frac{1}{4}, \frac{3}{4}\right)$ and are chosen so that both $\alpha_1$ and $\alpha_2$ define K\"ahler classes.
Then the decomposition does not admit a cKE metric for any $C\in \left( \frac{1}{4}, \frac{3}{4}\right)$.
\end{proposition}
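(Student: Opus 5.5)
We use Hultgren's toric criterion \cite[Theorem 2]{H19}, exactly as in Section \ref{sec:toric}. First we fix an explicit fan for $\mathcal F_2$ from Batyrev's list \cite{Bat99}, choosing coordinates on $N\cong\Z^3$ so that the projection $\pi:\mathcal F_2\to\P^1$ is induced by the projection $N\to\Z$ onto the last coordinate. Then the rays $\pm\bs e_3$, or their images after a shear as in Section \ref{sec:toric}, correspond to $D_1=\pi^{-1}(0)$ and $D_2=\pi^{-1}(\infty)$. In these coordinates the anticanonical polytope is
\[
P=\{\bs x : \bracket{\bs x,\bs v_\rho}\geq -1\}.
\]
We translate the classes $\alpha_1,\alpha_2$ into polytopes $P_C$ and $P_{1-C}$. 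Half of $c_1(\mathcal F_2)-\beta_1-\beta_2$ contributes the support numbers $-\tfrac12$ on every non-fiber facet. The fiber facets receive support numbers $-C$ (respectively $-(1-C)$). This gives a Minkowski decomposition $P=P_C+P_{1-C}$, which we check directly from the support functions. The Kähler condition cuts out exactly the interval $C\in\left(\tfrac14,\tfrac34\right)$.

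Next we compute the barycenters by slicing, as we did for $Y_k$. We fix the coordinate $u$ along the fiber direction, which ranges over an interval whose length is governed by $C$. Each slice $P_C(u)$ is then a two-dimensional polygon, a product or truncation of simplices coming from the base of $\mathcal F_2\to\P^1$. Its area is an explicit polynomial in $u$ and $C$, and its barycenter is an explicit affine-rational function. We use the lattice symmetries of the fan that fix the decomposition to kill as many barycenter coordinates as possible. The condition $\mathrm{bar}(P_C)+\mathrm{bar}(P_{1-C})=0$ then reduces to one or two scalar equations $G_j(C)=0$. Each $G_j$ is a rational function of $C$, and by construction $G_j(C)=G_j(1-C)$.

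The key step is to show that for some component $j$ the function $G_j$ has no zero on $\left(\tfrac14,\tfrac34\right)$. The most promising candidate is a component transverse to the fiber direction, since there the base geometry of $\mathcal F_2$, which is not KE, produces a barycenter shift that the rescaling of the fiber interval cannot undo. I would first check whether this transverse component has a sign independent of $C$: it is a weighted average over $u$ of slice barycenters that are all shifted to the same side, so it should not vanish. If instead the only surviving component is the fiber one, I would clear denominators and write $G(C)$ as a polynomial in $C$, symmetric under $C\mapsto1-C$, so a polynomial in $(C-\tfrac12)^2$. I would then prove it has no root on the interval by checking its sign at $C=\tfrac12$, where it is nonzero because $\mathcal F_2$ is not KE, together with monotonicity in $(C-\tfrac12)^2$ on $\left[0,\tfrac1{16}\right)$. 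I expect the main obstacle to be this final sign analysis, which is a verification of an explicit polynomial inequality on a closed interval. It can be made rigorous by bounding the derivative, or by Sturm sequences, if no clean factorization appears.
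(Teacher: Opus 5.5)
Your route is the paper's: Hultgren's criterion for the polytopes with support number $-C$ on the facets of $D_1,D_2$ and $-\frac12$ on the other six, a symmetry that kills barycenter coordinates, and a sign argument for the coordinate that survives. However, your setup misdescribes exactly the feature the proposition rests on, and you leave the decisive step at ``it should not vanish''.

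The rays of $D_1,D_2$ are not $\pm\bs e_3$ in any basis. The projection $\pi$ is induced by a functional $\phi$ on $N$ that equals $\pm1$ on $\bs v_1,\bs v_2$ and $0$ on $\bs v_3,\dots,\bs v_8$. The primitive relation $\bs v_1+\bs v_2=\bs v_7\neq\bs 0$ survives every linear change of coordinates, shears included. If the two rays were opposite, the fan would be the product of the fan of $\P^1$ with the hexagonal fan in $\ker\phi$, i.e.\ $\mathcal F_1=\P^1\times\Bl_{3\,\mathrm{pts}}(\P^2)$. There $P_C$ is $[-C,C]$ times a centrally symmetric hexagon, every barycenter vanishes, and cKE metrics exist for every admissible $C$: the opposite conclusion. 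For the same reason your heuristic is mislocated. The slices at fixed $u$ are pieces of the hexagon of the fiber $\Bl_{3\,\mathrm{pts}}(\P^2)$, not of the base $\P^1$. Base and fiber are both KE; the barycenter shift comes from the twist $\bs v_7$.

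In the paper's coordinates (after $A$), $P'(C)=\{\bs x : (x_1,x_3)\in H,\ |x_2|\le C-x_3\}$ with $H=\{|x_1|+|x_3|\le\frac12,\ |x_3|\le\frac14\}$. The involution $B$ swaps $D_1$ and $D_2$ and kills the $x_1$- and $x_2$-components. So the component along the $\P^1$-direction vanishes identically, and your fallback branch (a polynomial in $(C-\frac12)^2$ with a monotonicity check) never arises. The surviving coordinate is $x_3=-\frac12\bracket{\bs x,\bs v_7'}$. Your ``all slices shifted to one side'' idea does close the proof once stated correctly:

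\begin{itemize}
\item The slice at $x_2=u$ is $H\cap\{x_3\le C-|u|\}$.
\item Its $x_3$-barycenter is $0$ when $C-|u|\ge\frac14$.
\item It is strictly negative when $C-|u|<\frac14$: the slice is then either the symmetric band $|x_3|\le C-|u|$ plus a nonempty piece in $x_3<0$, or it lies entirely in $x_3<0$.
\item The second case occurs on a set of $u$ of positive measure, because $|u|$ ranges up to $C+\frac14$.
\end{itemize}

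Hence each of $P'(C)$ and $P'(1-C)$ has strictly negative $x_3$-barycenter, and the sum in Hultgren's criterion cannot vanish.

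The paper reaches the same conclusion by exact integration: $\int_{P'(C)}x_3\,dV=-\frac{5}{384}$ and $\Vol(P'(C))=\frac34C$, which turns the criterion into $\frac1C+\frac1{1-C}=0$. The corrected sign argument avoids the computation and carries over verbatim to the $\ell$-coupled corollary, since each summand is negative. The exact values additionally give the barycenter explicitly.
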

\begin{proof}
The moment polytope $P\subset M_\R$ associated with $(\mathcal F_2, \mathcal O(-K_{\mathcal F_2}))$ is given by
\begin{align*}
P&=\left \{\bs x=(x_1,x_2, x_3)\in \R^3\, \big |  \,  -x_2+x_3 \geq -1, ~ x_2 \geq -1, ~ x_1 \geq -1,  \right. \\
& \qquad \,  \hspace{7.5cm} \left.  -1 \leq x_1-x_3 \leq 1, ~ -1\leq x_3 \leq 1  \right \}.
\end{align*}
A direct computation shows that %the barycenter and the volume of $P$ are 
\begin{align}\label{eq:Bary1} 
\bs b_P&:=\int_P\bs x\, dV=\left( \frac{5}{12},  \frac{5}{12}, \frac{5}{6} \right)=\frac{5}{12}\bigl(1,1,2 \bigr),
\qquad \text{and} \qquad
\Vol(P):=\int_P  \bs 1\, dV =6, %\notag
\end{align}
respectively. In view of \eqref{eq:Bary1}, we introduce the linear transformation
\begin{equation}\label{eq:LinTrans}
A={
\begin{matrix} \\
 \\
 \\
 \end{matrix}
 }^{t}\!\!\!\left(\begin{array}{ccc}1 & 0 &  -1 \\  
  0 &1  & -1    \\  
  0 &0  & -2  
   \end{array}\right).
\end{equation}
Let $\Sigma$ denote the complete fan in $N_\R$ corresponding to
 $\mathcal F_2$. The one-dimensional cones
 \[
 \Sigma(1)=\Set{\R_{\geq 0}\bs v_1, \dots, \R_{\geq 0}\bs v_8}
 \]
 are generated by primitive vectors $\bs v_1,\dots, \bs v_8 \in N$, which correspond to the torus invariant divisors $D_1, \dots, D_8\subset \mathcal F_2$. Specifically, these primitive vectors are:
\begin{align*}
\bs v_1&=\begin{pmatrix}
0 \\ -1 \\ 1
\end{pmatrix},~\quad
\bs v_2=\begin{pmatrix}
0 \\ 1 \\ 0
\end{pmatrix},~\quad
\bs v_3=\begin{pmatrix}
1 \\ 0 \\ 0
\end{pmatrix},~\quad
\bs v_4=\begin{pmatrix}
-1 \\ 0 \\ 0
\end{pmatrix},~ \\
\bs v_5&=\begin{pmatrix}
1 \\ 0 \\ -1
\end{pmatrix},~\quad
\bs v_6=\begin{pmatrix}
-1 \\ 0 \\ 1
\end{pmatrix},~\quad
\bs v_7=\begin{pmatrix}
0 \\ 0 \\ 1
\end{pmatrix},~\quad
\bs v_8=\begin{pmatrix}
0 \\ 0 \\ -1
\end{pmatrix}.
\end{align*}
It is known that there exist exactly two distinct $3$-dimensional Fano polytopes $Q\subset N_\R$ with eight vertices, which arise as the polar dual polytopes of the moment polytopes $P\subset M_\R$; see \cite[Proposition $2.5.8$]{Bat99}). For toric Fano threefolds of type $\mathcal F$, the primitive collections are
\[
\Set{\bs v_1, \bs v_2}, ~~\quad  \Set{\bs v_3, \bs v_4, \bs v_5, \bs v_6, \bs v_7, \bs v_8}.
\]
The distinction between the geometric structure of the K\"ahler-Einstein (KE) manifold 
\[
\mathcal F_1= \P^1\times \mathrm{Bl}_{(3\text{pts})}(\P^2)
\] 
and non KE manifold $\mathcal F_2$ is reflected in the primitive relations
\[
%\begin{cases}
\bs v_1+ \bs v_2 = 0  \quad \text{for}\quad \mathcal F_1, \qquad 
\bs v_1+ \bs v_2 =\bs v_7  \quad \text{for}\quad \mathcal F_2.
%\end{cases}
\]
Motivated by this distinction, we single out the pair of torus-invariant divisors $\set{D_1, D_2}\subset \mathcal F_2$ corresponding to the primitive collection $\Set{\bs v_1, \bs v_2}$.

Applying the linear transformation $A$ in $\eqref{eq:LinTrans}$ to the generators $\bs v_1, \dots, \bs v_8$, we obtain new generators
\begin{align*}
\bs v_1'&=\begin{pmatrix}
0 \\ -1 \\ -1
\end{pmatrix},~\quad
\bs v_2'=\begin{pmatrix}
0 \\ 1 \\ -1
\end{pmatrix},~\quad
\bs v_3'=\begin{pmatrix}
1 \\ 0 \\ -1
\end{pmatrix},~\quad
\bs v_4'=\begin{pmatrix}
-1 \\ 0 \\ 1
\end{pmatrix},~ \\
\bs v_5'&=\begin{pmatrix}
1 \\ 0 \\ 1
\end{pmatrix},~\quad
\bs v_6'=\begin{pmatrix}
-1 \\ 0 \\ -1
\end{pmatrix},~\quad
\bs v_7'=\begin{pmatrix}
0 \\ 0 \\ -2
\end{pmatrix},~\quad
\bs v_8'=\begin{pmatrix}
0 \\ 0 \\ 2
\end{pmatrix}.
\end{align*}
Then the resulting polytope
\[
P':=\Set{\bs x\in \R^3 | \bracket{\bs x, \bs v_i'}\geq -1, ~~ i=1,2, \dots, 8}
\]
satisfies 
\[
\bs b_{P'}=\left( 0,0, -\frac{5}{24}
\right), \qquad %\text{and} \quad
\Vol(P')=\frac{1}{2}\cdot \Vol(P)=3.
\]
Next, we introduce a one-parameter family of polytopes
\begin{equation}\label{eq:NewPoly} 
P'(C):=\Set{\bs x\in \R^3 | \braket{\bs x, \bs v_i'} \geq -C, ~\, i=1,2, \quad  \braket{\bs x, \bs v_i'} \geq -\frac{1}{2}, ~\, i\neq 1,2}.
\end{equation}
One readily verifies that for any $C\in \left(\frac{1}{4}, \frac{3}{4} \right)$, $P'$ and $P'(C)$ have  the same number of vertices and are combinatorially equivalent. In particular, the divisors
\[
D(C):=C(D_1+D_2)+\sum_{i\neq 1,2}\frac{1}{2}D_i \quad \text{and} \quad D(1-C):=(1-C)(D_1+D_2)+\sum_{i\neq 1,2}\frac{1}{2}D_i
\]
are ample for all $C\in  \left(\frac{1}{4}, \frac{3}{4} \right)$.
%Claim1
\begin{claim}\label{claim:Bary1}
\[
\int_{P'(C)}x_1\, dV=\int_{P'(C)}x_2\, dV=0.
\]
\end{claim}
\begin{proof}[Proof of Claim \ref{claim:Bary1}]
The sets $\Set{\bs v_1', \bs v_2'}$, $\Set{\bs v_3', \dots,  \bs v_8'}$ are each invariant under the linear transformation
\begin{equation*}\label{eq:LinTrans2}
B=
\left(\begin{array}{ccc}
-1 & 0 &  0 \\  
  0 &-1  & 0    \\  
  0 &0  & 1  
   \end{array}\right)
\in \SL(3,\Z). 
\end{equation*}
Consequently, the polytope $P'(C)$ is invariant under $B$, and hence its 
unnormalized barycenter $\bs b_{P'(C)}$ is fixed by $B$. 
The fixed-point set of $B$ is precisely the $x_3$-axis.
Therefore, the first and second coordinates of the barycenter vanish, which implies
\[
\int_{P'(C)}x_1\, dV=\int_{P'(C)}x_2\, dV=0.
\]
\end{proof}

%Claim2
\begin{claim}\label{claim:Bary2}
\[
\int_{P'(C)}x_3\, dV=-\frac{5}{384}
\]
\end{claim}
which is independent of the parameter $C\in  \left(\frac{1}{4}, \frac{3}{4} \right)$.
\begin{proof}[Proof of Claim \ref{claim:Bary2}]
Let $D$ denote the two-dimensional square 
\[
D:=\Set{(x_1,x_3)\in \R^2 | -\frac{1}{2} \leq x_1-x_3 \leq \frac{1}{2}, ~~ -\frac{1}{2} \leq x_1+x_3 \leq \frac{1}{2}.
}
\]
By definition,
\begin{align*}
P'(C)&=\left \{\bs x=(x_1,x_2,x_3)\in \R^3 ~ \bigg | ~ -x_2-x_3 \geq -C,~ x_2-x_3\geq -C,  \right. \\ 
& \hspace{3.5cm} \left.  -\frac{1}{2}\leq x_1-x_3 \leq \frac{1}{2},~  -\frac{1}{2}\leq x_1+x_3 \leq \frac{1}{2}, ~ -\frac{1}{2}\leq 2x_3 \leq \frac{1}{2}  \right \}.
\end{align*}
Hence a point $\bs x=(x_1,x_2,x_3)$ belongs to $P'(C)$ if and only if 
\[
x_3 \in \left(-\frac{1}{4} , \frac{1}{4}  \right),\qquad  |x_2|\leq C-x_3, ~ \qquad (x_1, x_3)\in D,
\]
provided that $C\in \left( \frac{1}{4}, \frac{3}{4}\right)$.
Therefore, by direct computation, 
\begin{align*}
\int_{P'(C)}x_3\, dV&=\int_{x_3=-\frac{1}{4}}^{\frac{1}{4}}\int_{x_1=-\frac{1}{2}+|x_3|}^{\frac{1}{2}-|x_3|} \int_{x_2=x_3-C}^{C-x_3}x_3 \,dx_2\, dx_1\, dx_3\\
&=\int_{x_3=-\frac{1}{4}}^{\frac{1}{4}}\int_{x_1=-\frac{1}{2}+|x_3|}^{\frac{1}{2}-|x_3|}2x_3(C-x_3)dx_1\, dx_3\\
&=2\int_{-\frac{1}{4}}^{\frac{1}{4}}x_3(C-x_3)(1-2|x_3|)dx_3.
\end{align*}
Splitting the integral at $x_3=0$, we obtain
\begin{align*}
\int_{P'(C)}x_3\, dV&=
2\left(\int_{-\frac{1}{4}}^0x_3(C-x_3)(1+2x_3)dx_3+
\int_0^{\frac{1}{4}}x_3(C-x_3)(1-2x_3)dx_3\right)\\
&=\left(-\frac{C}{24}-\frac{5}{768} \right)+\left(\frac{C}{24}-\frac{5}{768} \right)=-\frac{5}{384}.
\end{align*}
This completes the proof.
\end{proof}
We emphasize that, by Claim \ref{claim:Bary2}, the integral $\int_{P'(C)}x_3\, dV$ is independent of the parameter $C$. A similar direct computation shows that
\begin{align*}
\Vol(P'(C)):=\int_{P'(C)}\bs 1\, dV&=\int_{x_3=-\frac{1}{4}}^{\frac{1}{4}}\int_{x_1=-\frac{1}{2}+|x_3|}^{\frac{1}{2}-|x_3|} \int_{x_2=x_3-C}^{C-x_3}\bs 1 \,dx_2\, dx_1\, dx_3\\
&=2\int_{-\frac{1}{4}}^{\frac{1}{4}}(C-x_3)(1-2|x_3|)dx_3\\
&=2\left(
\int_{-\frac{1}{4}}^0(C-x_3)(1+2x_3)dx_3+\int_0^{\frac{1}{4}}(C-x_3)(1-2x_3)dx_3\right)\\
&=\frac{1}{24}\left(9C+1+9C-1\ \right)=\frac{3}{4}C.
\end{align*}
By Hultgren's criterion (\cite[Theorem 2]{H19}), a necessary and sufficient condition for $(\alpha_1, \alpha_2)$ to admit a cKE metric is 
\begin{equation}\label{eq:cKE}
\frac{\int_{P'(C)}x_3\, dV}{\Vol(P'(C))}+\frac{\int_{P'(1-C)}x_3\, dV}{\Vol(P'(1-C))}=0, \qquad
C\in  \left(\frac{1}{4}, ~\frac{3}{4} \right).
\end{equation}
Substituting the above computations into $\eqref{eq:cKE}$, we have
\begin{align}\label{eq:cKE2}
\frac{4}{3C}\cdot \left(-\frac{5}{384} \right)+\frac{4}{3(1-C)}\cdot \left(-\frac{5}{384} \right)=0
\quad \Leftrightarrow \quad
\frac{1}{C}+\frac{1}{1-C}=0.
\end{align}
Clearly, $\eqref{eq:cKE2}$ has no solution for $C\in  \left(\frac{1}{4}, \frac{3}{4} \right)$.
Therefore, no cKE metric exists for any $C\in  \left(\frac{1}{4}, \frac{3}{4} \right)$.
\end{proof}

%Cor
\begin{corollary}\label{cor:F2}
Let $\mathcal F_2$, $D_i$ and $\beta_i$ $(i=1,2)$ be as in Proposition \ref{prop:F2}.
Let $\ell\geq 1$.
Consider a decomposition of $c_1(\mathcal F_2)$ given by an $\ell$-tuple 
$(\alpha_1,\dots, \alpha_\ell)$, where each $\alpha_i$ is defined by
\begin{align}\label{eq:alphas}
\alpha_j&=\frac{1}{\ell}\bigl(c_1(\mathcal F_2)-\beta_1-\beta_2 \bigr)+C_j(\beta_1+\beta_2), \qquad
j=1, \dots, \ell
\end{align}
with constants $C_j$ satisfying
\[
 \sum_{j=1}^\ell C_j=1, \qquad C_j\in\left(\frac{1}{2\ell}, \frac{2\ell-1}{2\ell} \right).
\]
These conditions ensure that $\alpha_1, \dots, \alpha_\ell$ are K\"ahler classes.
Then there is no coupled K\"ahler-Einstein (cKE) metric for any such $\ell$-tuple decomposition of $c_1(\mathcal F_2)$.
\end{corollary}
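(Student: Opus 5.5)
The plan is to repeat the proof of Proposition~\ref{prop:F2} for an $\ell$-tuple: describe each class $\alpha_j$ by an explicit polytope in the coordinates of $P'$, compute its $x_3$-moment and volume by the same slicing, and check that Hultgren's barycenter condition \cite[Theorem 2]{H19} cannot hold.

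\textbf{The polytopes.} After the linear change $A$ of \eqref{eq:LinTrans}, the class $\alpha_j$ in \eqref{eq:alphas} corresponds to the divisor
\[
C_j(D_1+D_2)+\sum_{i\neq 1,2}\tfrac{1}{\ell}D_i .
\]
Its polytope is
\[
Q_j:=\Set{\bs x\in\R^3 | \braket{\bs x,\bs v_i'}\geq -C_j ~(i=1,2),\quad \braket{\bs x,\bs v_i'}\geq -\tfrac{1}{\ell}~(i\neq 1,2)}.
\]
This is the analogue of \eqref{eq:NewPoly} with $\tfrac12$ replaced by $\tfrac1\ell$ on the facets $i\neq 1,2$. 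Since $\sum_j C_j=1$ and $\sum_j\tfrac1\ell=1$, the support numbers add up to those of the full anticanonical polytope. Moreover, for $C_j\in\bigl(\frac{1}{2\ell},\frac{2\ell-1}{2\ell}\bigr)$ each $Q_j$ has the same normal fan as $P'$; this is the same combinatorial check as for $C\in(\frac14,\frac34)$ in the case $\ell=2$. Together these give the Minkowski decomposition $P'=Q_1+\dots+Q_\ell$, so Hultgren's criterion applies.

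The transformation $B$ from Claim~\ref{claim:Bary1} preserves each $Q_j$. Hence the $x_1$- and $x_2$-moments of every $Q_j$ vanish, and the cKE condition reduces to
\[
\sum_{j=1}^{\ell}\frac{\int_{Q_j}x_3\,dV}{\Vol(Q_j)}=0 .
\]

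\textbf{Slicing $Q_j$.} Exactly as in Claim~\ref{claim:Bary2}, a point lies in $Q_j$ if and only if
\[
|x_3|\leq \tfrac{1}{2\ell},\qquad |x_2|\leq C_j-x_3,\qquad |x_1|\leq \tfrac1\ell-|x_3| .
\]
The hypothesis $C_j>\frac{1}{2\ell}$ guarantees $C_j-x_3>0$ on the whole range of $x_3$, so the middle constraint never degenerates. Integrating out $x_1$ and $x_2$ gives
\[
\Vol(Q_j)=\int_{-\frac{1}{2\ell}}^{\frac{1}{2\ell}}4(C_j-x_3)\Bigl(\tfrac1\ell-|x_3|\Bigr)dx_3=\frac{3C_j}{\ell^2}.
\]
For the moment, the term linear in $C_j$ is odd in $x_3$ and integrates to zero, leaving
\[
\int_{Q_j}x_3\,dV=-4\int_{-\frac{1}{2\ell}}^{\frac{1}{2\ell}}x_3^2\Bigl(\tfrac1\ell-|x_3|\Bigr)dx_3=-\frac{5}{24\ell^4}.
\]
This moment is independent of $C_j$. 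For $\ell=2$ both formulas agree with the values $\frac34 C$ and $-\frac{5}{384}$ obtained in the proof of Proposition~\ref{prop:F2}.

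\textbf{Conclusion.} The left-hand side of the cKE condition becomes
\[
\sum_{j=1}^{\ell}\frac{-5}{72\,\ell^2 C_j}.
\]
Every term is strictly negative because $C_j>0$, so the sum cannot vanish and no cKE metric exists.

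The only step needing real care is the first one. One must check that for $C_j$ in the stated interval the facets with normals $\bs v_1',\bs v_2'$ do not cut off any further faces, so that $Q_j$ is combinatorially equivalent to $P'$, $\alpha_j$ is K\"ahler, and the Minkowski sum is exactly $P'$. After that, the computation is the elementary slicing above.
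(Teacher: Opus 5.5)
Your proposal is correct and follows the paper's own proof: you describe each $\alpha_j$ by the polytope with support numbers $C_j$ on $\bs v_1',\bs v_2'$ and $\frac1\ell$ on the other facets, and observe that its $x_3$-moment is independent of $C_j$ while its volume is proportional to $C_j$, so Hultgren's condition reduces to $\sum_j 1/C_j=0$, which is impossible. Your explicit values $\int x_3\,dV=-\frac{5}{24\ell^4}$ and $\Vol=\frac{3C_j}{\ell^2}$ (which reproduce the paper's numbers for $\ell=2,3$) pin down the paper's unspecified constants $R_1,R_2$, and the general argument makes the paper's separate $\ell=3$ quadratic analysis unnecessary.
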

\begin{proof}
We first illustrate the argument for $\ell=3$. 
The corresponding polytope defined in $\eqref{eq:NewPoly}$ is
\begin{align*}
P'(C)&=\left \{\bs x\in \R^3 \,|\, x_2+x_3\leq C, -x_2+x_3\leq C, -\frac{1}{3}\leq x_1-x_3 \leq \frac{1}{3}, \right. \\
 &\qquad \hspace{6.5cm} \left. -\frac{1}{3}\leq x_1+x_3 \leq \frac{1}{3}, -\frac{1}{3}\leq 2x_3 \leq \frac{1}{3} \right \}.
\end{align*}
Repeating the computation in the proof of Claim \ref{claim:Bary2} gives
\begin{align}
\begin{split}\label{eq:Bary3couple}
\int_{P'(C)}x_3\, dV&=\int_{x_3=-\frac{1}{6}}^{\frac{1}{6}}\int_{x_1=-\frac{1}{3}+|x_3|}^{\frac{1}{3}-|x_3|} \int_{x_2=x_3-C}^{C-x_3}x_3 \,dx_2\, dx_1\, dx_3 
=-\frac{5}{1944}, \\
\Vol(P'(C))&=\int_{x_3=-\frac{1}{6}}^{\frac{1}{6}}\int_{x_1=-\frac{1}{3}+|x_3|}^{\frac{1}{3}-|x_3|} \int_{x_2=x_3-C}^{C-x_3}\bs 1 \,dx_2\, dx_1\, dx_3 =\frac{C}{3}.
\end{split}
\end{align}
Therefore, a necessary and sufficient condition for $(\alpha_1, \alpha_2, \alpha_3)$  to admit a cKE metric is
\begin{equation}\label{eq:cKE3couple}
\frac{\int_{P'(C_1)}x_3\,dV}{\Vol(P'(C_1))}+\frac{\int_{P'(C_2)}x_3\,dV}{\Vol(P'(C_2))}+\frac{\int_{P'(1-C_1-C_2)}x_3\,dV}{\Vol(P'(1-C_1-C_2))}=0
\end{equation}
with 
\[
C_1,~ C_2, ~ 1-C_1-C_2\in \left( \frac{1}{6}, \frac {5}{6}\right).
\]
Substituting $\eqref{eq:Bary3couple}$ into $\eqref{eq:cKE3couple}$, we obtain
\begin{align*}
\eqref{eq:cKE3couple} \quad &\Leftrightarrow \quad
\frac{1}{C_1}+\frac{1}{C_2}+\frac{1}{1-C_1-C_2}=0
\end{align*}
or equivalently,
\[
C_1C_2(1-C_1-C_2)\bigl( C_1+C_2-C_1^2-C_2^2-C_1C_2\bigr )=0.
\]
\begin{claim}\label{claim:RealSol}
There is no real solution of
\[
C_1+C_2-C_1^2-C_2^2-C_1C_2=0
\]
satisfying ${1}/{6}< C_1, ~ C_2, ~ 1-C_1-C_2<{5}/{6}$.
\end{claim}
\begin{proof}[Proof of Claim \ref{claim:RealSol}]
Set $x:=C_1$ and $y:=C_2$, and consider
\begin{equation}\label{eq:3coupled}
x^2+y^2+xy=x+y.
\end{equation}
Let $s:=x+y$. Since $x^2+y^2+xy=(x+y)^2-xy$, $\eqref{eq:3coupled}$ is equivalent to
\begin{equation}\label{eq:3coupled2}
(x+y)^2-xy=x+y \quad \Leftrightarrow \quad xy=s(s-1).
\end{equation}
Our K\"ahler condition implies 
\[
\frac{1}{6}< x<\frac{5}{6}, \quad \frac{1}{6}< y<\frac{5}{6}, \quad \frac{1}{6}< 1-x-y<\frac{5}{6}. \quad
\]
The last inequality gives
\[
\frac{1}{6}<1-s <\frac{5}{6} \quad \Leftrightarrow \quad \frac{1}{6} <s <\frac{5}{6}.
\]
Together with $x,y>1/6$, we obtain $s=x+y>1/3$. Hence,
\begin{equation}\label{ineq:condi}
\frac{1}{3}<s<\frac{5}{6}.
\end{equation}
From $\eqref{ineq:condi}$, we have $s>0$ and $s-1<0$, and therefore $s(s-1)<0$.

On the other hand, $\eqref{eq:3coupled2}$ gives 
\[
xy=s(s-1)<0,
\]
contradicting the assumption $x,y>\frac{1}{6}$.

Consequently, there is no real solution, satisfying
\[
x+y-x^2-y^2-xy=0
\]
together with
\[
\frac{1}{6}<x <\frac{5}{6},\quad \frac{1}{6}<y <\frac{5}{6},\quad \frac{1}{6}<1-x-y <\frac{5}{6}.
\]
\end{proof}

Now let $\ell\in \Z_{>0}$ be arbitrary. For each $j=1, \dots, \ell$, consider the polytope 
\begin{align*}
P'(C_j)&:=\left \{\bs x=(x,y,z)\in \R^3 \,~\bigg|~\, y+z\leq C_j, ~ -y+z\leq C_j, ~ -\frac{1}{\ell}\leq x-z \leq \frac{1}{\ell}, \right. \\
 &\qquad \hspace{6.5cm} \left. -\frac{1}{\ell}\leq x+z \leq \frac{1}{\ell}, ~ -\frac{1}{\ell}\leq 2z \leq \frac{1}{\ell} \right \},
\end{align*}
where $C_\ell:=1-\sum_{j=1}^{\ell-1}C_j$.
A direct computation shows that the barycenter of $P'(C_j)$ is independent of the parameter $C_j\in \left( \frac{1}{2\ell}, \frac{2\ell-1}{2\ell}\right)$, i.e.,
\begin{align*}
\int_{P'(C_j)}z\, dV&=
\int_{z=-\frac{1}{2\ell}}^{\frac{1}{2\ell}}\int_{x=-\frac{1}{\ell}+|z|}^{\frac{1}{\ell}-|z|} \int_{y=z-C_j}^{C_j-z}z \,dy\, dx\, dz =-R_1,
\end{align*}
where $R_1\in \R$ is a constant independent of the parameter $C_j$. 
Observe that the $\ell$-tuple $(\alpha_1, \dots, \alpha_\ell)$ given in \eqref{eq:alphas} determines
the K\"ahler classes as long as $C_j\in \left( \frac{1}{2\ell}, \frac{2\ell-1}{2\ell}\right)$ for each $1\leq j \leq \ell$. Moreover, $\Vol(P'(C_j))$ can be written as
\[
\Vol(P'(C_j))=\int_{P'(C_j)}\bs 1\, dV=R_2C_j,
\]
for some positive constant $R_2\in \R_{>0}$. Hence, $(\alpha_1, \dots, \alpha_\ell)$ admits a cKE metric if and only if
\begin{equation}\label{eq:k-coupled}
\sum_{j=1}^\ell \frac{\int_{P'(C_{j})}z\,dV}{\Vol(P'(C_{j}))}=0, \qquad C_j\in \left(\frac{1}{2\ell}, \frac{2\ell-1}{2\ell} \right).
\end{equation}
Substituting each value of $\int_{P'(C_j)}z\,dV$ and $\Vol(P'(C_{j}))$ into $\eqref{eq:k-coupled}$, we obtain
\begin{equation}\label{eq:k-coupled2}
-\sum_{j=1}^\ell \frac{-R_1}{R_2C_j}=0
\quad \Leftrightarrow \quad
\sum_{j=1}^\ell \prod_{s \neq j} C_s=0.
\end{equation}
However, since $C_j>0$ for all $j$, every term in the right hand side of $\eqref{eq:k-coupled2}$
is strictly positive, yielding a contradiction.
Therefore, equation $\eqref{eq:k-coupled2}$ has no solution satisfying
\[
C_j \in \left(\frac{1}{2\ell}, \frac{2\ell-1}{2\ell}  \right),
\]
and the proof is completed.
\end{proof}

%Rem
\begin{remark}\rm
Among toric Fano fourfolds $D_{19}$, $H_{10}$, $J_2$ and $Q_{17}$ in Proposition \ref{prop:Aut}, the author also proved that
$D_{19}$ is the unique non-KE toric Fano manifold for which there exists a two-coupled decomposition $(\alpha_1, \alpha_2)$ of the first Chern class $c_1(X)$ that admits a cKE metric, when considering the natural choice of torus-invariant divisors generating the Picard group $\Pic(X)$. However, proving the nonexistence of cKE metrics in general is more subtle, since one needs to
parameterize {\emph{all}} possible choices of torus-invariant divisors generating $\Pic(X)$.
\end{remark}

%%%%%%%%%%%%%%%%%%%%%%%%%%%%%%%%

\end{document}